\newcommand{\tabincell}[2]{\begin{tabular}{@{}#1@{}}#2\end{tabular}}
\newtheorem{theorem}{Theorem}[section]
\newtheorem{lemma}[theorem]{Lemma}
\theoremstyle{definition}
\newtheorem{definition}[theorem]{Definition}
\newcommand{\ssum} {\sum_{i=1}^{\infty} }
\def\dsum{\displaystyle\sum}
\def\rs{\rm\scriptsize}
\newcommand{\mdef}[1]{\textit{\textbf{#1}}}  %%JG
\newcommand{\vsa}{\vskip-12pt}
\newcommand{\vsb}{\vskip-6pt}
\newcommand{\Z}{{\mathbb Z}}
\def\CL#1{\left\lceil#1\right\rceil}
\def\FL#1{\left\lfloor#1\right\rfloor}
\def\VEC#1#2#3{#1_{#2},\ldots,#1_{#3}}
\def\UM#1#2{\bigcup_{{#1}\in{#2}}}
\def\FR#1#2{\frac{#1}{#2}}
\def\NN{{\mathbb N}}
\def\C#1{\left\vert{#1}\right\vert}
\def\wG{{\widehat G}}
\def\wB{{\widehat B}}
\def\vB{{\tilde B}}
\renewcommand{\baselinestretch}{1.15}
\begin{document}

\title{On the bar visibility number of complete bipartite graphs}

\author{
Weiting Cao\thanks{University of Illinois, Urbana, USA: caoweiting@gmail.com.}\,
\qquad
Douglas B. West\thanks{Zhejiang Normal University, Jinhua, China, and
University of Illinois, Urbana, USA: dwest@math.uiuc.edu.
Supported by NNSF of China under Grant NSFC-11871439.}
\qquad
Yan Yang\thanks{Tianjin University, Tianjin, China: yanyang@tju.edu.cn.
Supported by NNSF of China under Grant NSFC-11401430.}\,
}

\date{\today}

\maketitle

\begin{abstract}
A {\it $t$-bar visibility representation} of a graph assigns each vertex up to
$t$ horizontal bars in the plane so that two vertices are adjacent if and only
if some bar for one vertex can see some bar for the other via an unobstructed
vertical channel of positive width.  The least $t$ such that $G$ has a $t$-bar
visibility representation is the {\it bar visibility number} of $G$, denoted by
$b(G)$.  For the complete bipartite graph $K_{m,n}$, the lower bound
$b(K_{m,n})\ge\lceil{\frac{mn+4}{2m+2n}}\rceil$ from Euler's Formula is
well known.  We prove that equality holds.\\

\noindent
Keywords: bar visibility number; bar visibility graph; planar graph; thickness;
complete bipartite graph.

\noindent
MSC Codes: {05C62, 05C10}

\end{abstract}

\section{Introduction}

In computational geometry, graphs are used to model visibility relations in the
plane.  For example, we may say that two vertices of a polygon ``see'' each
other if the segment joining them lies inside the polygon.  In the
{\it visibility graph} on the vertex set, vertices are adjacent if they see
each other.  More complicated notions of visibility have been defined for
families of rectangles and other geometric objects.  Dozens of papers
have been written concerning construction and recognition of visibility
graphs and applications to search problems and motion planning.  For a textbook
on algorithms for visibility problems, see Ghosh~\cite{Gho}.

We consider visibility among horizontal segments in the plane.  A graph $G$ is
a \textit{bar visibility graph} if each vertex can be assigned a horizontal
line segment in the plane (called a {\it bar}) so that vertices are adjacent if
and only if the corresponding bars can see each other along an unobstructed
vertical channel with positive width.  The assignment of bars is a
\textit{bar visibility representation} of $G$.  The condition on positive width
allows bars $[(a,y),(x,y)]$ and $[(x,z),(c,z)]$ to block visibility at $x$
without seeing each other.

Tomassia and Tollis~\cite{Tamassia86} and Wismath~\cite{Wismath85} found a
simple characterization of bar visibility graphs.  Hutchinson~\cite{Hut}
later gave another simple proof for the $2$-connected case.

\begin{theorem}[\cite{Tamassia86, Wismath85}]\label{4}
A graph $G$ has a bar visibility representation if and only if for some
planar embedding of $G$ all cut-vertices appear on the boundary of one face.
\end{theorem}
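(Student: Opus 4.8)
The plan is to prove the two directions separately: necessity by reading topological structure off an arbitrary representation, and sufficiency by an explicit construction that builds a representation block by block. Throughout I will use the positive-width ($\varepsilon$-visibility) convention in the statement, which gives us the freedom to \emph{shorten} a bar in order to destroy an unwanted visibility without creating any new ones.

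For necessity, suppose $G$ has a bar visibility representation. Drawing each required visibility as a vertical segment inside its channel (the channels may be taken pairwise disjoint, since each has positive width) exhibits a planar embedding $\Pi$ of $G$; let $F_\infty$ be its unbounded face. I would then show that every cut-vertex of $G$ lies on $\partial F_\infty$. The key local fact is that the bar $B_v$ of a vertex $v$ emits its incident channels only upward and downward, so the blocks of $G$ through $v$ meet $B_v$ in a \emph{linear} left-to-right arrangement along its two long sides rather than cyclically around a point. Consequently, between the groups of channels belonging to different blocks at $v$ there is always a face incident to $v$ that is unobstructed out to one horizontal end of $B_v$; tracking these faces should show that they all coincide with $F_\infty$. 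The main work here is to turn ``emits only vertically'' into a rigorous statement about the rotation system of $\Pi$ at a cut-vertex, and to argue that the separating face actually reaches the unbounded region.

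For sufficiency, first reduce to connected $G$ by placing the representations of the components side by side. For connected $G$, fix an embedding in which all cut-vertices lie on the boundary of a single face $F$, and take $F$ to be the outer face. Inside $F$ I would add dummy edges joining the blocks into a spanning $2$-connected planar supergraph $H$ preserving this embedding; this is possible precisely because the cut-vertices are all accessible from the one face $F$. A $2$-connected plane graph admits a bar visibility representation by the classical construction from a bipolar ($st$-)orientation: a topological numbering of the vertices assigns the $y$-coordinates, while the horizontal extents of the bars are read from a topological numbering of the faces (the dual $st$-orientation). Finally I would delete the dummy edges one at a time, each time shortening an endpoint bar so that the corresponding channel loses its positive width; since only shrinking occurs, no required visibility is lost, and the result is a representation of $G$ itself.

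I expect the principal obstacle to be the necessity direction, specifically the claim that every cut-vertex is forced onto the unbounded face: this is where the purely combinatorial observation that the blocks at a vertex are contiguous in the rotation must be upgraded, using the geometry of horizontal bars, to the conclusion that the inter-block faces at a cut-vertex are all the same face $F_\infty$. On the sufficiency side the delicate point is to ensure that trimming bars to remove the dummy edges does not accidentally expose two other bars to each other through a newly opened channel; choosing the $st$-orientation so that the dummy edges sit on the outer boundary, and trimming from the outside inward, should keep this under control.
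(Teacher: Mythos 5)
The paper does not prove this statement at all: Theorem~\ref{4} is quoted from \cite{Tamassia86,Wismath85} and used as a black box, so there is no internal proof to compare yours against. Judged on its own, your proposal is a reasonable outline of the classical strategy, but both directions have genuine gaps that are exactly where the content of the theorem lives.

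For necessity, your key local claim --- that between the channel groups of two different blocks at a cut-vertex there is always a face reaching a horizontal end of the bar $B_v$ --- is false as stated. A bar has only two ends, so if three or more components of $G-v$ attach to $B_v$ (three triangles sharing $v$, say, with two attached above), at least one inter-component gap is a bounded face lying strictly above or below the bar; and even with two components, one component's channels can occupy both extremes of the rotation so that \emph{neither} end-gap separates components. More importantly, the theorem requires a \emph{single} face containing \emph{all} cut-vertices, and your sketch only says that ``tracking these faces should show that they all coincide with $F_\infty$''; that global coincidence is the actual theorem and is not argued. For sufficiency, note that \emph{any} connected plane graph can be augmented to a $2$-connected plane graph by adding edges, so your argument as written never uses the common-face hypothesis in a way that blocks the (false) conclusion that every connected planar graph is a bar visibility graph (e.g.\ $K_4$ with a pendant at each vertex is not one). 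The entire burden therefore falls on the deferred claim that the dummy edges can be positioned and trimmed away without exposing a new unwanted channel, which you acknowledge but do not establish. The standard way to close this is to add a single new apex vertex inside the common face, adjacent to enough boundary vertices to biconnect the graph, take it as the source of the $st$-numbering so that its bar is the unique bottom bar, and then delete that entire bar: removing a bottommost bar destroys only its own visibilities and can create no new ones, which sidesteps the trimming problem your version runs into.
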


Theorem~\ref{4} is quite restrictive.  Nevertheless, assigning multiple bars to
vertices permits representations of all graphs and leads to a complexity
parameter measuring how many bars are needed per vertex, introduced by Chang,
Hutchinson, Jacobson, Lehel, and West \cite{Chang04}.
%(Axenovich, Beveridge, Hutchinson, and West~\cite{ABHW} introduced an analogue
%for directed graphs.)

\begin{definition}[\cite{Chang04}]
A \emph{$t$-bar visibility representation} of a graph assigns to each vertex
at most $t$ horizontal bars in the plane so that vertices are adjacent if and
only if some bar assigned to one sees some bar assigned to the other via
an unobstructed vertical channel of positive width.
The \emph{bar visibility number} of a graph $G$, denoted by $b(G)$, is the
least integer $t$ such that $G$ has a $t$-bar visibility representation.
\end{definition}

%More precisely, given a graph $G$, suppose that each vertex $u$ is assigned a
%set $S_u$ of at most $t$ horizontal bars. Suppose also that vertices $u$ and
%$v$ are adjacent if and only if some bar from $S_u$ and some bar from $S_v$ can
%``see" each other via an unobstructed vertical line segment with positive
%width.

Results in \cite{Chang04} include the determination of visibility number for
planar graphs (always at most $2$), plus $b(K_n)=\CL{n/6}$ for $n\ge7$, the
determination of $b(K_{m,n})$ within $1$, and $b(G)\le \CL{n/6}+2$
for every $n$-vertex graph $G$.  Results on the visibility numbers for
hypercubes~\cite{West17} and an analogue for directed graphs~\cite{ABHW} have
also been obtained. For complete bipartite graphs, the result was as follows.

\begin{theorem}[\cite{Chang04}]\label{1}
$r\leq b(K_{m,n})\leq r+1$, where $r=\CL{\frac{mn+4}{2m+2n}}$.
\end{theorem}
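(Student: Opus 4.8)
\noindent
The statement splits into a lower bound $b(K_{m,n})\ge r$ and an upper bound $b(K_{m,n})\le r+1$, and I would prove them separately.

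For the lower bound I would run a double-counting argument based on Euler's Formula. Let $X,Y$ be the parts of $K_{m,n}$, of sizes $m$ and $n$. Given any $t$-bar visibility representation, form the planar graph $H$ whose vertices are the bars and whose edges join two bars exactly when they see each other; planarity holds because the bars and their vertical visibility channels embed in the plane. Let $H'$ be the spanning subgraph of $H$ keeping only the edges that join a bar of an $X$-vertex to a bar of a $Y$-vertex. Then $H'$ is simple, planar, and bipartite, so $|E(H')|\le 2|V(H')|-4$. Each edge of $H'$ joins one specific $X$-bar to one specific $Y$-bar and hence witnesses exactly one pair $(x_i,y_j)$; since all $mn$ pairs must be witnessed and distinct pairs need distinct edges, $|E(H')|\ge mn$. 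As there are at most $t(m+n)$ bars, $|V(H')|\le t(m+n)$. Combining gives $mn\le 2t(m+n)-4$, hence $t\ge\frac{mn+4}{2m+2n}$, and integrality yields $t\ge r$.

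For the upper bound the plan is to build a combinatorial object and hand it to Theorem~\ref{4}. The key reduction is this: it suffices to construct a $2$-connected bipartite planar graph $H$ with vertex classes $P,Q$, together with a labeling of $P$ by $\{1,\dots,m\}$ and of $Q$ by $\{1,\dots,n\}$, so that (i) for every pair $(i,j)$ some vertex of $P$ labeled $i$ is adjacent to some vertex of $Q$ labeled $j$, and (ii) no label is used more than $r+1$ times. Indeed, since $H$ is $2$-connected it has no cut-vertices, so Theorem~\ref{4} yields a (single-bar) bar visibility representation of $H$. Reassigning to the $i$-th $X$-vertex the bars labeled $i$ and to the $j$-th $Y$-vertex the bars labeled $j$ gives a multi-bar representation in which bars can see each other only across the classes (as $H$ is bipartite), so no two vertices inside one part become adjacent, while condition (i) forces every cross adjacency of $K_{m,n}$ to appear. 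By (ii) this is an $(r+1)$-bar visibility representation of $K_{m,n}$. To construct $H$ I would use a quadrangulation-type scaffold, for instance a grid-like or concentric-layer bipartite planar graph, take its proper $2$-coloring as $P,Q$, and distribute the labels along its rows or layers. A capacity count makes this plausible: a bipartite planar graph on $(r+1)(m+n)$ vertices admits up to $2(r+1)(m+n)-4$ edges, comfortably more than the $mn$ adjacencies to be covered (in fact even $2r(m+n)-4\ge mn$, so raw edge count is never the obstruction — the benefit of the $(r+1)$-st bar is extra freedom in placing labels, not extra edges).

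The main obstacle is condition (i): arranging the labels so that the $O(mn)$ available cross-adjacencies actually cover all $mn$ \emph{distinct} pairs $(i,j)$ without wasteful collisions, while keeping $H$ genuinely $2$-connected. Because the scaffold must behave almost like a quadrangulation, this amounts to a structured combinatorial design in which the neighborhoods of the (at most) $r+1$ bars of each vertex, taken together, must sweep out the opposite part with little repetition. I expect to handle it by a periodic, block-structured assignment of labels across the layers of the scaffold, and I anticipate that the bulk of the effort will lie in verifying simultaneously that this assignment is surjective onto the set of all pairs, that it respects the multiplicity bound $r+1$, and that the resulting $H$ has no cut-vertices, so that Theorem~\ref{4} applies cleanly.
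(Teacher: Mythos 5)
Your lower-bound argument is correct, and it is the same argument this paper sketches right after the theorem statement (and which \cite{Chang04} uses): pass from a $t$-bar representation to a planar graph on the bars, discard edges inside the parts, and apply the bipartite Euler bound $|E|\le 2|V|-4$ to get $mn\le 2t(m+n)-4$. One pedantic caveat you share with the paper's sketch: that Euler bound needs at least three vertices (indeed for $K_{1,1}$ the formula gives $r=2$ while $b(K_{1,1})=1$), so strictly one should exclude such degenerate cases or assume $m,n\ge 2$. Your reduction for the upper bound is also sound and is exactly the machinery of this paper: a $2$-connected bipartite planar graph with labels used at most $t$ times, covering all cross pairs, is a $2$-connected $t$-split; Theorem~\ref{4} represents it with one bar per labeled vertex, bipartiteness prevents unwanted adjacencies within a part, and extra cross visibilities are harmless because every cross pair is an edge of $K_{m,n}$. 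This is verbatim the strategy announced in Section~\ref{general}.

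The genuine gap is that you never carry out the upper-bound construction, which is the entire content of that half of the theorem. Your ``quadrangulation-type scaffold'' with a ``periodic, block-structured assignment'' of labels is a statement of intent, not an argument, and you concede as much (``I expect to handle it\dots I anticipate that the bulk of the effort\dots''). The capacity count $2(r+1)(m+n)-4\ge mn$ cannot substitute for it: as you yourself observe, raw edge count is never the obstruction, so the count establishes nothing beyond consistency. What must actually be exhibited is a labeling in which, for every $i$, the at most $r+1$ vertices labeled $i$ have neighborhoods that jointly cover all $n$ opposite labels with almost no repetition (near-quadrangulation efficiency), while the graph stays simultaneously planar, bipartite, and $2$-connected. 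That design problem is precisely where all the work lies: in \cite{Chang04} it required explicit constructions even to reach $r+1$, and in the present paper the constructions of Sections 4 and 5 (bricks, opposite pairs and skew pairs, the parity cases, and the separate handling of the leftover $j\le 3$ labels and of $2$-connectedness via Lemma~\ref{2conna}) exist exactly to solve it, there for the sharper bound $r$. As written, your proposal proves $b(K_{m,n})\ge r$ but only reduces $b(K_{m,n})\le r+1$ to an unconstructed combinatorial object, so the upper bound remains unproven.
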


\noindent
To prove the lower bound, consider a $t$-bar representation, add edges to
encode visibilities that produce edges of $K_{m,n}$, and then shrink bars to
single points.  This produces a bipartite plane graph $H$ with at most $t(m+n)$
vertices and at least $mn$ edges.  Hence $mn\le 2t(m+n)-4$ by Euler's Formula,
so $b(K_{m,n})\ge r$.  Equality requires most faces in $H$ to have length $4$.

In this paper, we prove $b(K_{m,n})=r$.  Section $2$ contains a short proof
valid for $K_{n,n}$.  For this case, it suffices to decompose the graph into
$r$ bar visibility graphs, where a \textit{decomposition} of $G$ is a set of
edge-disjoint subgraphs whose union is $G$.  The subgraphs can then be
repesented with disjoint projections on the horizontal axis.  In Section $3$,
we present a different approach that solves the problem for all complete
bipartite graphs.

Our results are related to earlier work.  A {\it $t$-split} of a graph $G$ is a
graph $H$ in which each vertex is replaced by a set of at most $t$ independent
vertices in such a way that $u$ and $v$ are adjacent in $G$ if and only if some
vertex in the set representing $u$ is adjacent in $H$ to some vertex in the set
representing $v$.  The graph $G$ used to prove the lower bound for
Lemma~\ref{1} is an example of a $t$-split of $K_{m,n}$.  As defined by
Eppstein et al.~\cite{Epp}, the {\it planar split thickness}
(or simply {\it split thickness}) of a graph $G$, which we denote by
$\sigma(G)$, is the minimum $t$ such that $G$ has a $t$-split that is a
planar graph.  As explained above, always $\sigma(G)\le b(G)$.  If $G$ has
a $\sigma(G)$-split that is $2$-connected, then $\sigma(G)=b(G)$.

This connection was noted earlier in the thesis of the first author~\cite{Cao},
where planar split thickness was given the unfortunate name ``split number'',
creating confusion with another concept.  The {\it splitting number} of a
graph is the minimum number of successive splits of one vertex into two
(with each incident edge being inherited by one of the two new vertices)
needed to produce a planar graph.

The notion of $t$-split originated with Heawood~\cite{Hea}, who proved
that $K_{12}$ has a $2$-split.  Later, Ringel and Jackson~\cite{RJ}
proved in effect that $K_n$ has a $\CL{n/6}$-split.  A short proof of this
by Wessel~\cite{Wes} was used in~\cite{Chang04} to prove $b(K_n)=\CL{n/6}$.

The results in~\cite{Epp} that concern complete bipartite graphs determine
those that are $2$-splittable.  They are the same as those having bar
visibility number at most $2$.  Their lower bounds on $\sigma(K_{m,n})$ use
the same counting argument from Euler's Formula that yields the lower bounds
for $b(K_{m,n})$ (see~\cite{Chang04}).

In~\cite{Epp}, the authors close the paper by asking whether graphs embeddable
on the surface of genus $k$ are $(k+1)$-splittable, as an open question.
This follows from a recent result about the {\it thickness} $\theta(G)$ of a
graph $G$, defined to be the minimum number of planar graphs needed to
decompose $G$.  A decomposition into $k$ planar graphs is a $k$-split,
so $\sigma(G)\le \theta(G)$; this motivates the term ``split thickness''.
Xu and Zha~\cite{XZ} proved that $\theta(G)\le k+1$ when $G$ embeds on the
surface of genus $k$, thereby providing a positive answer to the question
in~\cite{Epp}.

\section{The bar visibility number of $K_{n,n}$}

As noted above, thickness provides an upper bound on the split thickness,
and the split thickness usually equals the bar visibility number.  Beineke,
Harary, and Moon~\cite{BHM64} determined $\theta(K_{m,n})$ for most $m$ and $n$.

\begin{lemma}[\cite{BHM64}]\label{3a}
$\theta(K_{n,n})=\CL{\frac{n+2}{4}}$.
\end{lemma}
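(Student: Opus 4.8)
The plan is to prove the two inequalities separately: the lower bound $\theta(K_{n,n})\ge\CL{\frac{n+2}{4}}$ by an edge count derived from Euler's Formula, and the matching upper bound by exhibiting an explicit decomposition of $K_{n,n}$ into $\CL{\frac{n+2}{4}}$ planar subgraphs. The lower bound is the routine half; the explicit planar decomposition is where the real work lies, so I would spend most of the effort there.

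For the lower bound, I would start from the standard fact that a simple bipartite planar graph on $N$ vertices has at most $2N-4$ edges (every face of a bipartite plane graph has length at least $4$, so $2E\ge 4F$, and combining this with $N-E+F=2$ yields $E\le 2N-4$). A planar subgraph of $K_{n,n}$ has $N=2n$ vertices and hence at most $4n-4$ edges, so a decomposition into planar graphs needs at least $\CL{\frac{n^2}{4n-4}}$ parts. It then remains to verify the arithmetic identity $\CL{\frac{n^2}{4(n-1)}}=\CL{\frac{n+2}{4}}$. I would do this by writing $\frac{n^2}{4(n-1)}=\frac{n+1}{4}+\frac{1}{4(n-1)}$ and analyzing the four residues of $n$ modulo $4$. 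The only delicate residue is $n\equiv 3\pmod 4$: there $\frac{n+1}{4}$ is already an integer, so the strictly positive term $\frac{1}{4(n-1)}$ is exactly what forces the ceiling up to $\frac{n+2}{4}$'s value; in the other three residues the fractional part of $\frac{n+1}{4}$ already dominates the tiny correction, and the two ceilings coincide.

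For the upper bound I would exploit the cyclic symmetry of $K_{n,n}$: index both parts by $\Z_n$ and write the edge set as the disjoint union of the $n$ perfect matchings $M_d=\{a_i b_{i+d}:i\in\Z_n\}$, collecting the edges of each ``difference'' $d$. The aim is to partition these $n$ matchings into $t=\CL{\frac{n+2}{4}}$ groups, each spanning a planar graph. Since a planar bipartite subgraph on $2n$ vertices admits at most $4n-4$ edges, a group can hold almost four but not quite four matchings' worth of edges: the union of any four matchings is $4$-regular with $4n>4n-4$ edges, hence nonplanar. This deficit of four edges per near-quadrangulation group is precisely the origin of the extra $+2$ in the formula. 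The construction should therefore realize each planar subgraph as a quadrangulation (every face a $4$-cycle, meeting the $4n-4$ bound with equality) built from roughly four matchings with a bounded number of edges removed and redistributed among the groups, obtained from one explicitly embedded ``brick-wall'' tile together with its cyclic rotations $d\mapsto d+c$.

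The main obstacle is verifying planarity of these circulant-type pieces and organizing the bookkeeping so that all $n^2$ edges are covered by exactly $t$ groups, including the wrap-around edges, with every group at or below $4n-4$ edges. Planarity cannot be taken for granted: the naive choice of three consecutive matchings is a cubic bipartite circulant, and the case $n=3$ gives $M_0\cup M_1\cup M_2=K_{3,3}$, which is nonplanar, so each tile must be justified by an explicit embedding rather than by a degree count. Handling this correctly requires a case analysis according to $n$ modulo $4$ (and tracking the residual edges at the boundary between the tile and its rotates), and matching the per-group surplus and deficit so that the decomposition closes up with exactly $\CL{\frac{n+2}{4}}$ planar subgraphs is the delicate step that makes the bound hold with equality.
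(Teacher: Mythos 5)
The paper does not prove this lemma at all: it is quoted verbatim from Beineke, Harary, and Moon \cite{BHM64}, so there is no internal proof to match your argument against. Judged on its own terms, your lower bound is complete and correct: the bound $E\le 2N-4$ for bipartite planar graphs, the resulting estimate $\theta(K_{n,n})\ge\lceil n^2/(4n-4)\rceil$, and the arithmetic identity $\lceil n^2/(4(n-1))\rceil=\lceil (n+2)/4\rceil$ (via $n^2/(4(n-1))=(n+1)/4+1/(4(n-1))$ and the four residues of $n$ bmod $4$) all check out. This is the same Euler-formula counting the paper uses elsewhere for its lower bounds.

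The upper bound, however, is a plan rather than a proof, and that is where the entire content of the lemma lives. You never exhibit the ``brick-wall'' tile, never verify that any candidate tile is planar, and you yourself observe that the naive grouping of consecutive difference matchings fails ($M_0\cup M_1\cup M_2=K_{3,3}$ for $n=3$); the union of four cyclically consecutive matchings is a $4$-regular quadrangulation of the torus, so even after deleting four edges its planarity is exactly the kind of claim that must be argued, not assumed, and the subsequent redistribution of the deleted edges among $\lceil (n+2)/4\rceil$ groups is unaddressed bookkeeping. It is also worth noting that the known extremal decompositions --- including the Chen--Yin blocks $G_j$ that this paper actually uses in Theorem 2.4 --- achieve the bound $4n-4$ with planar quadrangulations of a completely different shape: each block has eight hub vertices inducing $K_{4,4}$ minus a perfect matching, with every other vertex attached as a degree-$2$ vertex inside a $4$-face, rather than being a union of four near-perfect matchings. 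So your guiding heuristic points at the wrong family of extremal pieces, and the construction that would close the gap is missing.
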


When $\theta(K_{n,n})$ is the desired value for $b(K_{n,n})$, we aim to
decompose $K_{n,n}$ into that number of bar visibility graphs.  The difficult
case is when $b(K_{n,n})<\theta(K_{n,n})$.

\begin{theorem}\label{2}
$b(K_{n,n})=\CL{\frac{n+1}{4}}$, except for $b(K_{3,3})=2$.
\end{theorem}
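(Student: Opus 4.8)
The plan is to match the lower bound of Theorem~\ref{1} with explicit constructions, organised by the residue of $n$ modulo $4$. For the lower bound, the parameter $r$ of Theorem~\ref{1} specialises to $\CL{\frac{n^2+4}{4n}}=\CL{\frac n4+\frac1n}$; writing $n=4q+s$ with $s\in\{0,1,2,3\}$, I would verify the elementary inequality $0<\frac s4+\frac1n<1$, which holds for all $n\ge4$ and for $n=2$, so that this ceiling equals $q+1=\CL{\frac{n+1}{4}}$. Thus Theorem~\ref{1} gives $b(K_{n,n})\ge\CL{\frac{n+1}{4}}$ except where the inequality fails: at $n=1$ one has $b(K_{1,1})=1$ directly, and at $n=3$ the ceiling jumps to $r=2$, which is exactly the stated exception (consistently, $K_{3,3}$ is nonplanar, so $b(K_{3,3})\ge2$ also follows from Theorem~\ref{4}).

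For the upper bound, set $t=\CL{\frac{n+1}{4}}$. As recalled in the introduction, it suffices to produce a $2$-connected planar $t$-split $H$ of $K_{n,n}$: a $2$-connected planar graph is a bar visibility graph by Theorem~\ref{4}, and grouping the bars of a representation of $H$ according to the vertex of $K_{n,n}$ they encode yields a $t$-bar representation of $K_{n,n}$, the bipartiteness of $H$ ruling out spurious $A$--$A$ or $B$--$B$ visibilities. When $H$ can be taken to be a disjoint union of $t$ such graphs with one bar per vertex, this is precisely a decomposition of $K_{n,n}$ into $t$ bar visibility graphs laid side by side.

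When $n\not\equiv3\pmod4$, Lemma~\ref{3a} gives $\theta(K_{n,n})=t$, so the target equals the thickness and an honest decomposition is available. Indexing both parts by $\Z_n$, I would cut $K_{n,n}$ into $t$ ``bands'', the $\ell$th band carrying the edges $a_ib_j$ with $j-i$ in a block of about four consecutive residues; each band is a bipartite planar graph drawn as a quadrangulated cylinder, and after shifting a few boundary edges among the bands to make each one $2$-connected, Theorem~\ref{4} certifies every band as a bar visibility graph. Laying the $t$ bands side by side gives the representation.

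The difficult case is $n\equiv3\pmod4$; write $n=4k+3$. Here Lemma~\ref{3a} gives $\theta(K_{n,n})=t+1$, and an edge count explains the obstruction: $t$ spanning bipartite planar graphs hold at most $t(4n-4)=n^2-1$ edges, one fewer than the $n^2$ edges of $K_{n,n}$, so \emph{no} decomposition into $t$ bar visibility graphs can exist. The missing room is recovered only by interleaving: a single $2$-connected $t$-split on $2nt$ bars may be a bipartite quadrangulation with $2(2nt)-4=n^2+(4k-1)$ edges, giving exactly $4k-1$ edges of slack, which is nonnegative precisely when $n\ge7$. I would therefore build $H$ directly as one bipartite quadrangulation on $2nt$ vertices, grouped into $n$ $A$-blocks and $n$ $B$-blocks of size $t$ so that every $A$-block meets every $B$-block; a simple quadrangulation is automatically $2$-connected, so Theorem~\ref{4} applies. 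Concretely I would give an explicit cylindrical pattern, or induct in steps of four, passing from a $t$-split of $K_{n,n}$ to a $(t+1)$-split of $K_{n+4,n+4}$. The crux of the whole argument is this construction: with only $4k-1$ edges to spare, the quadrangulation must still realise all $n^2$ block-pairs, and exhibiting such a pattern for every $k\ge1$ is where the real work lies. The remaining small cases $n\in\{1,2,3\}$ are then handled by inspection.
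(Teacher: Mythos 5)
Your reduction of the upper bound to producing $2$-connected planar splits, your edge count showing that $K_{4k+3,4k+3}$ admits \emph{no} decomposition into $t=k+1$ bar visibility graphs, and your computation of the slack $4k-1$ available to a single bipartite quadrangulation on $2nt$ vertices are all correct and consistent with the paper's framing. But the proposal stops exactly where the theorem's content begins. For $n\equiv 3\pmod 4$ you say you would ``give an explicit cylindrical pattern, or induct in steps of four,'' and then concede that ``exhibiting such a pattern for every $k\ge1$ is where the real work lies.'' That exhibition \emph{is} the hard case of the theorem, and with only $4k-1$ spare edges it is genuinely delicate; no candidate pattern is given, and no inductive step is verified. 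The paper does not build a monolithic quadrangulation at all. Instead it takes the thickness-minimal planar decompositions of $K_{4p+3,4p+3}$ into $p+2$ parts due to Hobbs--Grossman and Bouwer--Broere, in which $H_1,\dots,H_{p+1}$ are $2$-connected maximal planar bipartite graphs and $H_{p+2}$ is a single edge $u_iv_j$, and then absorbs that one leftover edge without spending an extra bar: using $s,t$-numberings it makes $u_i$ the unique lowest bar of the representation of $H_1$ and $v_j$ the unique highest bar of the representation of $H_2$, stacks the two pictures, and separately argues that bars can be extended to block unwanted visibilities between the layers. Without either that stacking idea or an explicit split, your case $n\equiv3\pmod4$ is a restatement of the problem, not a proof.

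A secondary gap: in the cases $n\not\equiv3\pmod4$, the ``band'' carrying all edges $a_ib_j$ with $j-i$ in a block of four consecutive residues is not planar --- it has $4n$ edges on $2n$ vertices, exceeding the bound $4n-4$ for bipartite planar graphs --- so ``shifting a few boundary edges among the bands'' must do real structural work that is not described. The paper avoids this by citing an explicit planar decomposition of $K_{4p,4p}$ (Chen--Yin) into $p$ two-connected pieces plus a perfect matching and then modifying it by hand for $n\equiv1,2\pmod4$. This part of your argument is repairable, but as written it too is a sketch rather than a construction.
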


\begin{proof}
It is immediate that $K_{1,1}$ and $K_{2,2}$ are bar visibility graphs.
Since $K_{3,3}$ is not planar, $b(K_{3,3})\ge 2$; equality holds because
$K_{3,3}$ decomposes into a $6$-cycle and a matching of size $3$, both of
which are bar visibility graphs.  Hence we may assume $n\ge 4$.

Let $r=\CL{(n+1)/4}$.  When $\theta(K_{n,n})=r$, we will decompose $K_{n,n}$
into $r$ bar visibility graphs.  This will leave the case where
$n\equiv 3\mod4$, in which case $r<\theta(K_{n,n})$ and $K_{n,n}$ cannot
decompose into $r$ bar visibility graphs.
Let $U$ and $V$ be the parts of $K_{n,n}$, with $U=\{u_1,\ldots, u_{n}\}$ and
$V=\{v_1,\ldots, v_{n}\}$.  Let $p=\FL{n/4}$.

For $n\equiv 0\mod4$, Chen and Yin \cite{CYi16} provided a decomposition of
$K_{n,n}$ into $p+1$ planar subgraphs $\{G_1,\ldots,G_{p+1}\}$.  Let
$[p]=\{1,\dots,p\}$.  For $1\le j\le p$, let
$U_1^j=\UM i{[p]-\{j\}}\{u_{4i-3},u_{4i-2}\}$, let
$U_2^j=\UM i{[p]-\{j\}}\{u_{4i-1},u_{4i}\}$, let
$V_1^j=\UM i{[p]-\{j\}}\{u_{4i-3},u_{4i-1}\}$, and let
$V_1^j=\UM i{[p]-\{j\}}\{u_{4i-2},u_{4i}\}$.
Figure 1 shows the subgraph $G_j$, for $1\le j\le p$.
Being a $2$-connected planar graph, it is a bar visibility graph.
The subgraph induced by the eight special vertices $\VEC u{4j-3}{4j}$ and
$\VEC v{4j-3}{4j}$ is $K_{4,4}$ minus the edges of the form $u_iv_i$.  The
remaining graph $G_{p+1}$ is the matching consisting of $u_iv_i$ for
$1\le i\le 4p$.  Again this is a bar visibility graph.
% $K_{4p,4p}$, in which we denote the vertex sets
%$\bigcup\limits^p_{i=1,i\neq r}\{u_{4i-3},u_{4i-2}\}$, $\bigcup\limits^p_{i=1,i\neq r}\{u_{4i-1},u_{4i}\}$,
%$\bigcup\limits^p_{i=1,i\neq r}\{v_{4i-3},v_{4i-1}\}$ and $\bigcup\limits^p_{i=1,i\neq r}\{v_{4i-2},v_{4i}\}$ by $U_1^r$, $U_2^r$, $V_1^r$ and $V_2^r$ respectively.

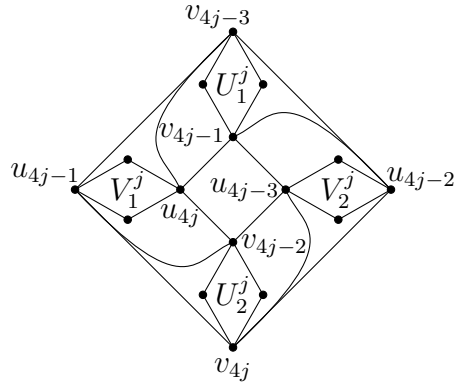
\begin{figure}[h!]
\begin{center}
\begin{tikzpicture}                                                         %%%%%%%%%%%%% 图4第一个图
[inner sep=0pt]
\filldraw [black] (0,2.1) circle (1.4pt)
                  (2.1,0) circle (1.4pt)
                  (2.1,4.2) circle (1.4pt)
                  (4.2,2.1) circle (1.4pt);
\filldraw [black]     (1.4,2.1) circle (1.4pt)
                      (2.8,2.1) circle (1.4pt)
                      (2.1,1.4) circle (1.4pt)
                      (2.1,2.8) circle (1.4pt);
\filldraw [black] (0.7,2.5) circle (1.4pt)
                  (0.7,1.7) circle (1.4pt);
\filldraw [black]      (3.5,2.5) circle (1.4pt)
                       (3.5,1.7) circle (1.4pt);
\filldraw [black] (2.5,0.7) circle (1.4pt)
                  (1.7,0.7) circle (1.4pt);
\filldraw [black]      (2.5,3.5) circle (1.4pt)
                       (1.7,3.5) circle (1.4pt);
%\filldraw [black] (0.7,2.2) circle (0.7pt)
%                  (0.7,2.05) circle (0.7pt)
%                  (0.7,1.9) circle (0.7pt);
%\filldraw [black]      (3.5,2.2) circle (0.7pt)
%                       (3.5,2.05) circle (0.7pt)
%                       (3.5,1.9) circle (0.7pt);
%\filldraw [black] (2.2,0.7) circle (0.7pt)
%                  (2.05,0.7) circle (0.7pt)
%                  (1.9,0.7) circle (0.7pt);
%\filldraw [black] (2.2,3.5) circle (0.7pt)
%                  (2.05,3.5) circle (0.7pt)
%                  (1.9,3.5) circle (0.7pt);
\draw (0,2.1)--(2.1,0);\draw (2.1,0)--(4.2,2.1);\draw (2.1,4.2)--(0,2.1);
\draw (4.2,2.1)--(2.1,4.2);
\draw (1.4,2.1)--(2.1,1.4);\draw (2.1,1.4)--(2.8,2.1);\draw (2.8,2.1)--(2.1,2.8);\draw (1.4,2.1)--(2.1,2.8);
\draw (0,2.1)--(0.7,2.5);\draw (0,2.1)--(0.7,1.7);
\draw (1.4,2.1)--(0.7,2.5);\draw (1.4,2.1)--(0.7,1.7);
\draw (2.8,2.1)--(3.5,2.5);\draw (2.8,2.1)--(3.5,1.7) ;
\draw (4.2,2.1)--(3.5,2.5);\draw (4.2,2.1)--(3.5,1.7) ;
\draw (2.1,0)--(2.5,0.7);\draw (2.1,0)--(1.7,0.7);
\draw (2.1,1.4)--(2.5,0.7);\draw (2.1,1.4)--(1.7,0.7);
\draw (2.1,2.8)--(2.5,3.5);\draw (2.1,2.8)--(1.7,3.5);
\draw (2.1,4.2)--(2.5,3.5);\draw (2.1,4.2)--(1.7,3.5);
\draw (2.1,-0.3) node {$v_{4j}$}
      (1.9,4.4) node {$v_{4j-3}$}
      (-0.4,2.3) node {$u_{4j-1}$}
      (4.6,2.25) node {$u_{4j-2}$};
\draw    (2.65,1.35) node {$v_{4j-2}$}
          (1.55,2.83) node {$v_{4j-1}$}
           (1.4,1.75) node {$u_{4j}$}
             (2.25,2.1) node {$u_{4j-3}$};
\draw[-](2.1,4.2)..controls+(-1.6,-1.5)and+(-0.2,0.5)..(1.4,2.1);
\draw[-](0,2.1)..controls+(1.5,-1.6)and+(-0.5,-0.2)..(2.1,1.4);
\draw[-](2.1,0)..controls+(1.6,1.5)and+(0.2,-0.5)..(2.8,2.1);
\draw[-](4.2,2.1)..controls+(-1.5,1.6)and+(0.5,0.2)..(2.1,2.8);
%\draw[-](0.7,2) to (-0.5,1.4);\draw (-1.2,1.2) node {\tiny $\bigcup\limits^p_{i=1,i\neq r}\{v_{4i-3},v_{4i-1}\}\triangleq V_1^r$};
\draw (.7,2.1) node {$V_1^j$};
%\draw[-](3.6,2.1) to (4.2,3.2);\draw (4.8,3.5) node {\tiny $\bigcup\limits^p_{i=1,i\neq r}\{v_{4i-2},v_{4i}\}\triangleq V_2^r$};
\draw (3.5,2.1) node {$V_2^j$};
%\draw[-](2.00,0.8) to (3.5,1);\draw (5.00,1.0) node {\tiny $\bigcup\limits^p_{i=1,i\neq r}\{u_{4i-1},u_{4i}\}\triangleq U_2^r$};
\draw (2.1,0.7) node {$U_2^j$};
%\draw[-](2.05,3.5) to (0.7,3.9);\draw (-0.2,4.03) node {\tiny $\bigcup\limits^p_{i=1,i\neq r}\{u_{4i-3},u_{4i-2}\}\triangleq U_1^r$};
\draw (2.1,3.5) node {$U_1^j$};
\end{tikzpicture}
%\\ (a) The graph $G_r\ \ (1\leq r\leq p)$
%\end{center}
%\end{figure}
%\begin{figure}[htp]
%\begin{center}
%\begin{tikzpicture}
%[xscale=1]
%\tikzstyle{every node}=[font=\tiny,scale=1]
%[inner sep=0pt]\tiny
%
%\filldraw [black] (0,1) circle (1.4pt)
%                   (0,-0.5) circle (1.4pt)
%                  (1,1) circle (1.4pt)
%                   (1,-0.5) circle (1.4pt)
%                   (3,1) circle (1.4pt)
%                   (3,-0.5) circle (1.4pt)
%                  (4,1) circle (1.4pt)
%                  (4,-0.5) circle (1.4pt);
%\draw(4,1)-- (4,-0.5);\draw(3,1)-- (3,-0.5);
%\draw(1,1)-- (1,-0.5);\draw(0,1)-- (0,-0.5);
%\filldraw [black] (1.8,0.3) circle (0.8pt)
%                               (2,0.3) circle (0.8pt)
%                               (2.2,0.3) circle (0.8pt);
%\draw (-0.2,0.9) node {\tiny $u_{1}$}
%      (-0.2,-0.4) node {\tiny $v_{1}$}
%      (0.8,0.9) node {\tiny $u_{2}$}
%      (0.8,-0.4) node {\tiny $v_{2}$};
%\draw (3. 5,0.9) node {\tiny $u_{4p-1}$}
%      (3.45,-0.4) node {\tiny $v_{4p-1}$}
%      (4.35,0.9) node {\tiny $u_{4p}$}
%      (4.3,-0.4) node {\tiny $v_{4p}$};
%\end{tikzpicture}
%\\ (b) The graph $G_{p+1}$
\caption{The graph $G_j$ in a planar decomposition of $K_{4p,4p}$.}
\label{figure 1}
\end{center}
\end{figure}

For $n=4p+1$, we add two vertices $u_{4p+1}$ and $v_{4p+1}$,
with $u_{4p+1}$ adjacent to $V$ and $v_{4p+1}$ adjacent to $U$.
The edges incident to $u_{4p+1}$ and $v_{4p+1}$ can be added to the
graph $G_{p+1}$ of the previous case, as shown in Figure 2.
Again this graph is planar and $2$-connected, so again we have a
decomposition $\VEC{\tilde G}1{p+1}$ into $p+1$ bar visibility graphs.

\begin{figure}[htp]
\begin{center}
\vskip-0.8cm\begin{tikzpicture}
[xscale=1]
%\tikzstyle{every node}=[font=\tiny,scale=1]
[inner sep=0pt]
\filldraw [black] (0,1) circle (1.4pt)
                   (0,-0.5) circle (1.4pt)
                  (1,1) circle (1.4pt)
                   (1,-0.5) circle (1.4pt)
                   (3,1) circle (1.4pt)
                   (3,-0.5) circle (1.4pt)
                  (4,1) circle (1.4pt)
                  (4,-0.5) circle (1.4pt);
\draw(4,1)-- (4,-0.5);\draw(3,1)-- (3,-0.5);
\draw(1,1)-- (1,-0.5);\draw(0,1)-- (0,-0.5);
\filldraw [black] (1.8,0.3) circle (0.8pt)
                               (2,0.3) circle (0.8pt)
                               (2.2,0.3) circle (0.8pt);
\draw (-0.3,0.9) node {$u_{1}$}
      (-0.3,-0.4) node {$v_{1}$}
      (0.7,0.9) node {$u_{2}$}
      (0.7,-0.4) node {$v_{2}$};
\draw (2.4,0.9) node {$u_{4p-1}$}
      (2.4,-0.4) node {$v_{4p-1}$}
      (4.4,0.9) node {$u_{4p}$}
      (4.35,-0.4) node {$v_{4p}$};
\filldraw [black] (2,2) circle (1.4pt)
                  (2,-1.5) circle (1.4pt);
\draw(2,2)-- (0,1);\draw(2,2)-- (1,1);\draw(2,2)-- (3,1);\draw(2,2)-- (4,1);
\draw(2,-1.5)-- (0,-0.5);\draw(2,-1.5)-- (1,-0.5);\draw(2,-1.5)-- (3,-0.5);\draw(2,-1.5)-- (4,-0.5);
\draw (2.5,2.1) node {$v_{4p+1}$}
      (2.5,-1.7) node {$u_{4p+1}$};
\draw[-](2,2)..controls+(-3.8,1)and+(-3.8,-1)..(2,-1.5);
\end{tikzpicture}
\vskip-0.8cm\caption{The subgraph $\widetilde{G}_{p+1}$ in the planar
decomposition of $K_{4p+1,4p+1}$}
\label{figure 2}
\end{center}
\end{figure}

For $n=4p+2$, we modify the decomposition given for $K_{4p,4p}$ to accommodate
the edges incident to $\{u_{4p+1},u_{4p+2},v_{4p+1},v_{4p+2}\}$.
First form $\widehat G_{p+1}$ by adding to the matching $G_{p+1}$ the edges
joining $u_{4p+1}$ to $\UM i{[p]}\{v_{4i-2},v_{4i}\}$,
joining $u_{4p+2}$ to $\UM i{[p]}\{v_{4i-3},v_{4i-1}\}$,
joining $v_{4p+1}$ to $\UM i{[p]}\{v_{4i-2},v_{4i-3}\}$, and
joining $v_{4p+2}$ to $\UM i{[p]}\{v_{4i},v_{4i-1}\}$,
plus the edges of the $4$-cycle $[u_{4p+1},v_{4p+1},u_{4p+2},v_{4p+2}]$,
as shown in Figure 3.
To include the remaining edges involving the four added vertices,
for $1\le j\le p$ obtain $\widehat G_j$ from $G_{j}$ by adding $u_{4p+i}$ to
$U_i^j$ and $v_{4p+i}$ to $V_i^j$, for $i\in\{1,2\}$.  Each of these four
vertices gains the two neighbors in $G_j$ that are shared by the vertices of
the set to which it was added.  Over the resulting $\VEC{\widehat G}1p$, it
gains precisely the neighbors in the other part that it does not have in
$\widehat G_{p+1}$.  We again have $r$ $2$-connected planar graphs decomposing
$K_{n,n}$.

\begin{figure}[htp]
\begin{center}
\begin{tikzpicture}
[xscale=1]
%\tikzstyle{every node}=[font=\tiny,scale=1]
[inner sep=0pt]
\filldraw [black] (0,1) circle (1.4pt)
                   (0,-0.5) circle (1.4pt)
                  (1,1) circle (1.4pt)
                   (1,-0.5) circle (1.4pt)
                   (1.5,1) circle (1.4pt)
                   (1.5,-0.5) circle (1.4pt)
                   (2.5,1) circle (1.4pt)
                   (2.5,-0.5) circle (1.4pt)
                   (3,1) circle (1.4pt)
                   (3,-0.5) circle (1.4pt)
                  (4,1) circle (1.4pt)
                  (4,-0.5) circle (1.4pt)
                    (5.5,1) circle (1.4pt)
                   (5.5,-0.5) circle (1.4pt)
                   (6.5,1) circle (1.4pt)
                   (6.5,-0.5) circle (1.4pt)
                   (7,1) circle (1.4pt)
                   (7,-0.5) circle (1.4pt)
                  (8,1) circle (1.4pt)
                  (8,-0.5) circle (1.4pt)
                  (8.5,1) circle (1.4pt)
                   (8.5,-0.5) circle (1.4pt)
                   (9.5,1) circle (1.4pt)
                   (9.5,-0.5) circle (1.4pt);
\draw(4,1)-- (4,-0.5);\draw(3,1)-- (3,-0.5);\draw(1.5,1)-- (1.5,-0.5);\draw(2.5,1)-- (2.5,-0.5);\draw(5.5,1)-- (5.5,-0.5);
\draw(7,1)-- (7,-0.5);\draw(8,1)-- (8,-0.5);
\draw(1,1)-- (1,-0.5);\draw(0,1)-- (0,-0.5);\draw(6.5,1)-- (6.5,-0.5);\draw(8.5,1)-- (8.5,-0.5);\draw(9.5,1)-- (9.5,-0.5);
\filldraw [black] (0.3,0.3) circle (0.8pt) (0.5,0.3) circle (0.8pt) (0.7,0.3) circle (0.8pt);
\filldraw [black] (3.3,0.3) circle (0.8pt) (3.5,0.3) circle (0.8pt) (3.7,0.3) circle (0.8pt);
\filldraw [black] (5.8,0.3) circle (0.8pt) (6,0.3) circle (0.8pt) (6.2,0.3) circle (0.8pt);
\filldraw [black] (8.8,0.3) circle (0.8pt) (9,0.3) circle (0.8pt) (9.2,0.3) circle (0.8pt);
\filldraw [black] (2,2.5) circle (1.4pt)
                  (7.5,2.5) circle (1.4pt)
                  (4.8,-2) circle (1.4pt)
                  (4.8,-3.5) circle (1.4pt);
\draw (-0.55,0.8) node {$v_{4p-2}$}
      (-0.55,-0.3) node {$u_{4p-2}$}
      (0.75,0.8) node {$v_{6}$}
      (0.75,-0.3) node {$u_{6}$}
      (1.25,0.8) node {$v_{2}$}
      (1.25,-0.3) node {$u_{2}$}
      (2.25,0.8) node {$v_{4}$}
      (2.25,-0.3) node {$u_{4}$}
      (2.75,0.8) node {$v_{8}$}
      (2.75,-0.3) node {$u_{8}$}
      (3.7,0.8) node {$v_{4p}$}
      (3.7,-0.3) node {$u_{4p}$}
      (6.0,0.8) node {$v_{4p-1}$}
      (6.0,-0.3) node {$u_{4p-1}$}
      (6.75,0.8) node {$v_{7}$}
      (6.75,-0.3) node {$u_{7}$}
      (7.25,0.8) node {$v_{3}$}
      (7.25,-0.3) node {$u_{3}$}
      (8.25,0.8) node {$v_{1}$}
      (8.25,-0.3) node {$u_{1}$}
      (8.75,0.8) node {$v_{5}$}
      (8.75,-0.3) node {$u_{5}$}
      (10.05,0.8) node {$v_{4p-3}$}
      (10.05,-0.3) node {$u_{4p-3}$};
\draw (2,2.8) node {$u_{4p+1}$}
      (7.5,2.8) node {$u_{4p+2}$}
      (4.8,-2.2) node {$v_{4p+2}$}
      (4.8,-3.7) node {$v_{4p+1}$};
\draw(2,2.5)-- (0,1);\draw(2,2.5)-- (1,1);\draw(2,2.5)-- (1.5,1);\draw(2,2.5)-- (2.5,1);\draw(2,2.5)-- (3,1);\draw(2,2.5)-- (4,1);
\draw(7.5,2.5)-- (5.5,1);\draw(7.5,2.5)-- (6.5,1);\draw(7.5,2.5)-- (7,1);\draw(7.5,2.5)-- (8,1);\draw(7.5,2.5)-- (8.5,1);\draw(7.5,2.5)-- (9.5,1);
\draw(4.8,-2)-- (2.5,-0.5);\draw(4.8,-2)-- (3,-0.5);\draw(4.8,-2)-- (4,-0.5);\draw(4.8,-2)-- (5.5,-0.5);\draw(4.8,-2)-- (6.5,-0.5);\draw(4.8,-2)-- (7,-0.5);
\draw(4.8,-3.5)-- (0,-0.5);\draw(4.8,-3.5)-- (1,-0.5);\draw(4.8,-3.5)-- (1.5,-0.5);\draw(4.8,-3.5)-- (8,-0.5);\draw(4.8,-3.5)-- (8.5,-0.5);\draw(4.8,-3.5)-- (9.5,-0.5);
\draw[-](2,2.5)..controls+(-5.3,-2.2)..(4.8,-3.5);
\draw[-](7.5,2.5)..controls+(-2.7,-1)..(4.8,-2);
\draw[-](2,2.5)..controls+(2.7,-1)..(4.8,-2);
\draw[-](7.5,2.5)..controls+(5.3,-2.2)..(4.8,-3.5);
\end{tikzpicture}
\caption{The subgraph $\widehat{G}_{p+1}$ in the planar decomposition of $K_{4p+2,4p+2}$}
\label{figure 3}
\end{center}
\end{figure}

%Notice that removing the vertices $u_{4p+2}$ and $v_{4p+2}$ for each graph in \\$\{\widehat{G}_1,\ldots,\widehat{G}_{p+1}\}$ gives a planar decomposition of $K_{4p+1,4p+1}$, but it is a little bit more complicated than the one explicitly constructed for $n=4p+1$ above.

%It is easy to check that, when $n=4p, 4p+1, 4p+2 (p\geq 1)$, all the $p+1$ subgraphs for the planar subgraphs decomposition of $K_{n,n}$,  admit visibility representations from Lemma \ref{4}. So their bar visibility number is $p+1$.

The remaining case is $n=4p+3$.  A graph $G$ is \textit{thickness $t$-minimal}
if $\theta(G)=t$ and every proper subgraph of $G$ has thickness less than $t$.
When $n=4p+3$, the graph $K_{4p+3,4p+3}$ is a thickness $(p+2)$-minimal graph.
Hobbs and Grossman \cite{HG68} and Bouwer and Broere \cite{BB68} independently
gave two different decompositions of $K_{4p+3,4p+3}$ into planar subgraphs
$\VEC H1{p+2}$.  In each case, each $H_{i}$ for $1\le i\le p+1$ is a
2-connected maximal planar bipartite graph (hence a bar visibility graph), and
the graph $H_{p+2}$ contains only one edge.  Let this edge be $u_{i}v_{j}$ (it
is $u_{1}v_{1}$ in \cite{HG68}  and $u_{4p+3}v_{4p-1}$ in \cite{BB68}).

The bar visibility representation algorithm of \cite{Tamassia86} uses
``$s,t$-numberings'', allowing one to choose any vertex of a bar visibility
graph to be the unique lowest or highest bar in the representation.  Since we
have reduced to the case $n\ge4$, we have $p+1\ge2$.  Choose a representation
of $H_1$ in which $u_i$ is the lowest bar and a representation of $H_2$ in
which $v_j$ is the highest bar.  Place the representation of $H_1$ above the
representation of $H_2$ to incorporate the edge $u_iv_j$ without using an extra
bar for $u_i$ or $v_j$.

We must also show that the bars for $u_i$ in $H_1$ and $v_j$ in $H_2$ can
prevent unwanted visibilities between bars for vertices above and below them.
Since the graph is bipartite, we may assume that bars for the two parts occur
on horizontal lines with those for $U$ having odd vertical coordinates and
those for $V$ having even coordinates.  In addition, the bars on one horizontal
line can extend to meet at endpoints to block visibility between higher and
lower bars for the other part (using both the requirement of positive width for
visibility and the fact that we are representing the complete bipartite graph).
The bars can extend so that on each horizontal line the leftmost occupied point
is the same and the rightmost occupied point is the same.  Now the two
representations can combine as described above.
\end{proof}

\section{General approach to $b(K_{m,n})$}\label{general}

Henceforth let $f(m,n)=\CL{\FR{mn+4}{2m+2n}}$.
Our proof of $b(K_{m,n})\le f(m,n)$ for $m,n\in\NN$ is independent of the
shorter proof for $m=n$ given in the previous section, which relied on
thickness results from earlier papers.  This proof is self-contained.

As mentioned in the introduction, it suffices to produce a $2$-connected
$r$-split of $K_{m,n}$, where $r=f(m,n)$; this is our aim.  We will consider
various cases depending on parity.  In this section we present the common
aspects of the constructions.  We may assume $m\ge n$.  Let the two parts of
$K_{m,n}$ be $X$ and $Y$ with $X=\{\VEC x1m\}$ and $Y=\{\VEC y1n\}$.

When $n$ is even and $m> \frac{1}{2}(n^2-2n-4)$, or when $n$ is odd and
$m>n^2-n-4$, we compute $r=\CL{\FR n2}$.  In this case let $G_i$ be the subgraph
induced by $X\cup\{y_{2i-1},y_{2i}\}$, except that $G_{(n+1)/2}$ is the
subgraph induced by $X\cup\{y_n\}$ when $n$ is odd.  Since $K_{m,2}$ and
$K_{m,1}$ are bar visibility graphs, this decomposes $K_{m,n}$ into
$r$ bar visibility graphs.  Note that $3>3^2-3-4$, so when $n=3$ we have
already considered all cases, and henceforth we may assume $n\ge4$.

We have also considered all cases with $r=\CL{\FR n2}$, so
henceforth we may assume $r\leq \FL{\FR{n-1}2}$.  Let $s = \CL{\FR n2}-r$.
For fixed $n$, the value of $\FR{mn+4}{2m+2n}$ increases with $m$.
Since $m\ge n$, we have $r\ge\CL{\FR{n^2+1}{4n}}\ge\CL{\FR{n+1}4}$.
Thus $s\le r$.  The case $s=r$ requires $s=r=\FR{n+1}4$ and hence
$n\equiv3\mod4$.  For $m\in\{n,n+1,n+2\}$, the values of $\FR{mn+4}{2m+2n}$
are $\FR{n+1}4$, $\FR{n+0.5+15/(4n+2)}4$, and $\FR{n+1+3/(n+1)}4$,
respectively.  The last exceeds $\FR{n+1}4$.  Thus the case $s=r$ occurs
if and only if $n\equiv 3\mod4$ and $m\in\{n,n+1\}$.  Otherwise, $s<n/4<r$.

We will construct a $2$-connected planar graph $G$ that is an $r$-split of
$K_{m,n}$.  In $G$, each vertex will have a label in $X\cup Y$, with each label
used at most $r$ times.  When no vertices labeled $x_i$ and $y_j$ are yet
adjacent, we say that $x_i$ {\it misses} $y_j$; otherwise $x_i$ {\it hits}
$y_j$.  We place vertices in the coordinate plane, with vertices labeled by $X$
on the horizontal axis and vertices labeled by $Y$ on the vertical axis.  Edges
will join only the two axes, so no unwanted edges are formed.  To facilitate
understanding, we first exhibit in Figure~\ref{k87} the graph $G$ that we
produce when $(m,n)=(8,7)$.  For clarity, we record only the subscripts of the
labels on the vertices; the labels are from $X$ on the horizontal axis and from
$Y$ on the vertical axis.

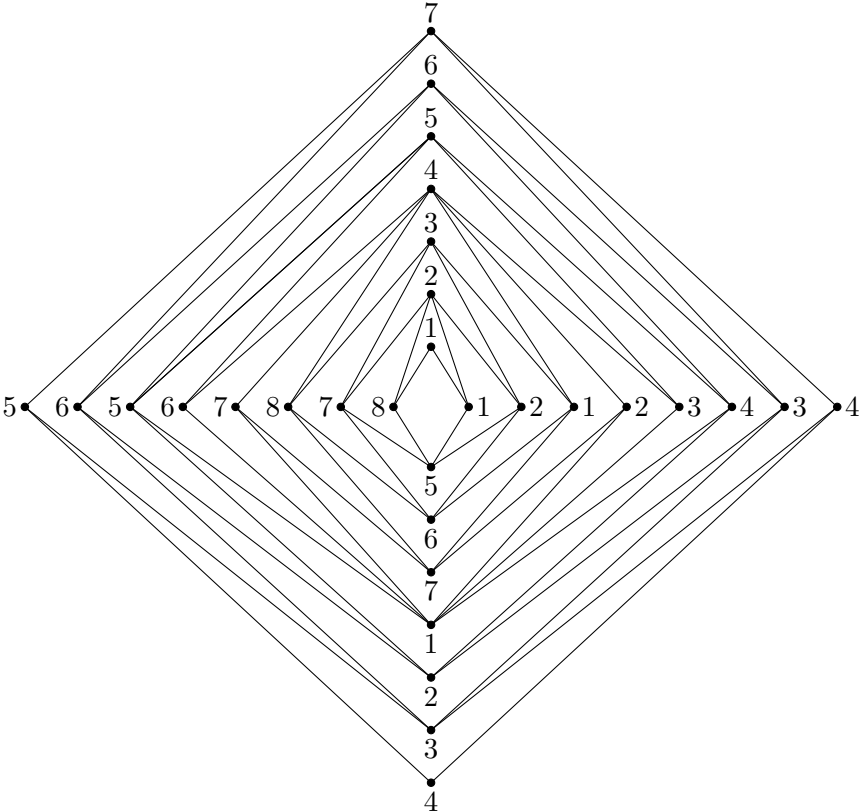
\begin{figure}[htp]
\begin{center}
\begin{tikzpicture}
\begin{scope}[yshift=-9.5cm]

%\draw (1.3,-0.3) node {$X-$};\draw (13,-0.3) node {$X+$};
\filldraw [black] (1.7,0) circle (1.4pt)
                   (2.4,0) circle (1.4pt)
                  (3.1,0) circle (1.4pt)
                   (3.8,0) circle (1.4pt)
                   (4.5,0) circle (1.4pt)
                   (5.2,0) circle (1.4pt)
                   (5.9,0) circle (1.4pt)
                  (6.6,0) circle (1.4pt)
                   (7.6,0) circle (1.4pt)
                  (8.3,0) circle (1.4pt)
                   (9,0) circle (1.4pt)
                   (9.7,0) circle (1.4pt)
                   (10.4,0) circle (1.4pt)
                   (11.1,0) circle (1.4pt)
                   (11.8,0) circle (1.4pt)
                   (12.5,0) circle (1.4pt);
\draw (1.5,0) node {\small 5}
 (2.2,0) node {\small 6}
 (2.9,0) node {\small 5}
(3.6,0) node {\small 6}
 (4.3,0) node {\small 7}
 (5,0) node {\small 8}
(5.7,0) node {\small 7}
(6.4,0) node {\small 8}
 (7.8,0) node {\small 1}
 (8.5,0) node {\small 2}
 (9.2,0) node {\small 1}
 (9.9,0) node {\small 2}
 (10.6,0) node {\small 3}
 (11.3,0) node {\small 4}
 (12,0) node {\small 3}
 (12.7,0) node {\small 4};

%\draw (7.5,5.5) node {$Y+$};\draw (7.5,-5.3) node {$Y-$};
\filldraw [black] (7.1,0.8) circle (1.4pt)
                   (7.1,1.5) circle (1.4pt)
                   (7.1,2.2) circle (1.4pt)
                   (7.1,2.9) circle (1.4pt)
                   (7.1,3.6) circle (1.4pt)
                   (7.1,4.3) circle (1.4pt)
                   (7.1,5) circle (1.4pt)
                   (7.1,-0.8) circle (1.4pt)
                   (7.1,-1.5) circle (1.4pt)
                   (7.1,-2.2) circle (1.4pt)
                   (7.1,-2.9) circle (1.4pt)
                   (7.1,-3.6) circle (1.4pt)
                   (7.1,-4.3) circle (1.4pt)
                   (7.1,-5) circle (1.4pt);

\draw (7.1,1.05) node {\small 1}
 (7.1,1.75) node {\small 2}
 (7.1,2.45) node {\small 3}
(7.1,3.15) node {\small 4}
 (7.1,3.85) node {\small 5}
 (7.1,4.55) node {\small 6}
(7.1,5.25) node {\small 7}
(7.1,-1.05) node {\small 5}
 (7.1,-1.75) node {\small 6}
 (7.1,-2.45) node {\small 7}
 (7.1,-3.15) node {\small 1}
 (7.1,-3.85) node {\small 2}
 (7.1,-4.55) node {\small 3}
 (7.1,-5.25) node {\small 4};

\draw(7.1,3.6)--(3.8,0);
\draw(1.7,0)-- (7.1,5);\draw(1.7,0)-- (7.1,-4.3);\draw(1.7,0)-- (7.1,-5);
\draw(2.4,0)-- (7.1,5);\draw(2.4,0)-- (7.1,4.3);\draw(2.4,0)-- (7.1,-4.3);\draw(2.4,0)-- (7.1,-3.6);
\draw(3.1,0)-- (7.1,4.3);\draw(3.1,0)-- (7.1,3.6);\draw(3.1,0)-- (7.1,-2.9);\draw(3.1,0)-- (7.1,-3.6);
\draw(3.8,0)-- (7.1,2.9);\draw(3.1,0)-- (7.1,3.6);\draw(3.8,0)-- (7.1,-2.9);
\draw(4.5,0)-- (7.1,2.9);\draw(4.5,0)-- (7.1,-2.2);\draw(4.5,0)-- (7.1,-2.9);
\draw(5.2,0)-- (7.1,2.9);\draw(5.2,0)-- (7.1,2.2);\draw(5.2,0)-- (7.1,-2.2);\draw(5.2,0)-- (7.1,-1.5);
\draw(5.9,0)-- (7.1,1.5);\draw(5.9,0)-- (7.1,2.2);\draw(5.9,0)-- (7.1,-0.8);\draw(5.9,0)-- (7.1,-1.5);
\draw(6.6,0)-- (7.1,1.5);\draw(6.6,0)-- (7.1,0.8);\draw(6.6,0)-- (7.1,-0.8);

\draw(7.6,0)-- (7.1,1.5);\draw(7.6,0)-- (7.1,0.8);\draw(7.6,0)-- (7.1,-0.8);
\draw(8.3,0)-- (7.1,1.5);\draw(8.3,0)-- (7.1,2.2);\draw(8.3,0)-- (7.1,-0.8);\draw(8.3,0)-- (7.1,-1.5);
\draw(9.0,0)-- (7.1,2.9);\draw(9.0,0)-- (7.1,2.2);\draw(9.0,0)-- (7.1,-2.2);\draw(9.0,0)-- (7.1,-1.5);
\draw(9.7,0)-- (7.1,2.9);\draw(9.7,0)-- (7.1,-2.2);\draw(9.7,0)-- (7.1,-2.9);
\draw(10.4,0)-- (7.1,2.9);\draw(10.4,0)-- (7.1,3.6);\draw(10.4,0)-- (7.1,-2.9);
\draw(11.1,0)-- (7.1,4.3);\draw(11.1,0)-- (7.1,3.6);\draw(11.1,0)-- (7.1,-2.9);\draw(11.1,0)-- (7.1,-3.6);
\draw(11.8,0)-- (7.1,5);\draw(11.8,0)-- (7.1,4.3);\draw(11.8,0)-- (7.1,-4.3);\draw(11.8,0)-- (7.1,-3.6);
\draw(12.5,0)-- (7.1,5);\draw(12.5,0)-- (7.1,-4.3);\draw(12.5,0)-- (7.1,-5);
\end{scope}
\end{tikzpicture}
\caption{A bar visibility graph that is a $2$-split of $K_{8,7}$} \label{k87}
\end{center}
\end{figure}

{\bf The plan:}
We first construct subgraphs separately in each half-plane bounded by the
vertical axis.  Combining these two subgraphs along the vertical axis will
yield a $2$-connected plane graph $\wG$ with $rn+sm$ vertices such that
labels in $X$ occur $s$ times and labels in $Y$ occur $r$ times.  Ideally,
each $x_i\in X$ will hit $n-2(r-s)$ different vertices of $Y$, and the vertices
of $Y$ that $x_i$ misses will form $r-s$ pairs such that each pair lies on a
face of length $4$.  We will then insert a copy of $x_i$ in each such face,
adjacent to the missed vertices of $Y$, so that $x_i$ now hits all $n$ vertices
of $Y$.  This brings the usage of each label to $r$ vertices, and the result
will be a $2$-connected $r$-split of $K_{m,n}$.  Because the parity of
$n-2(r-s)$ depends on the parity of $n$, we will need to use different
building blocks for even $n$ and odd $n$.

\medskip
We begin by addressing the matter of $2$-connectedness.

\begin{lemma}\label{2conna}
Let $\wG$ be a plane graph whose vertex set is comprised of sets $A^+$, $A^-$,
$B^+$, and $B^-$ placed along the positive and negative horizontal axes and
the positive and negative vertices axes, respectively (as in Figure~\ref{k87}).
If the four subgraphs induced by $B^+\cup A^+$, $A^+\cup B^-$, $B^-\cup A^-$,
and $A^-\cup B^+$ are connected (and there are no other edges), then $\wG$
is $2$-connected.
\end{lemma}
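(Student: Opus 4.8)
The plan is to discard the geometry entirely and argue purely combinatorially, exploiting the cyclic way in which the four sets overlap the four connected subgraphs. First I would relabel the sets cyclically as $S_1=A^+$, $S_2=B^+$, $S_3=A^-$, $S_4=B^-$, reading indices modulo $4$; all four are nonempty, being placed along their respective axes. With this labeling the four induced subgraphs of the hypothesis are exactly $G[S_i\cup S_{i+1}]$ for $i\in\{1,2,3,4\}$, and since edges join the horizontal axis to the vertical axis there are no edges inside any single $S_i$; thus every edge of $\wG$ joins consecutive sets $S_i$ and $S_{i+1}$. As a warm-up I would record that $\wG$ is connected: consecutive subgraphs $G[S_i\cup S_{i+1}]$ and $G[S_{i+1}\cup S_{i+2}]$ share the nonempty set $S_{i+1}$, so their union is connected, and chaining around the cycle connects all of $\wG$.

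The substance is to show $\wG$ has no cut vertex. I would fix an arbitrary $v\in V(\wG)$, say $v\in S_j$, and prove $\wG-v$ is connected. The two subgraphs $G[S_{j+1}\cup S_{j+2}]$ and $G[S_{j+2}\cup S_{j+3}]$ involve only the three sets other than $S_j$, so both avoid $v$; they are connected and share the nonempty set $S_{j+2}$, so their union $K:=G[S_{j+1}\cup S_{j+2}\cup S_{j+3}]$ is a connected subgraph of $\wG-v$ containing every vertex outside $S_j$. It then remains only to reattach the leftover vertices of $S_j$. For any $w\in S_j\setminus\{v\}$, the subgraph $G[S_j\cup S_{j+1}]$ is connected with at least two vertices, so $w$ has a neighbor in it; since $\wG$ has no edges inside a single set, that neighbor lies in $S_{j+1}\subseteq V(K)$, and it differs from $v$ because $v\in S_j$. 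Hence the edge from $w$ survives in $\wG-v$ and joins $w$ to $K$, so every vertex of $S_j\setminus\{v\}$ lies in the component of $K$. Thus $\wG-v$ is connected, and since $v$ was arbitrary, $\wG$ has no cut vertex. Being connected with at least four vertices, $\wG$ is $2$-connected.

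The one point demanding care is the overlap bookkeeping, and this is where I expect the only real subtlety to lie. The crux is that, after deleting a vertex of $S_j$, the two subgraphs that still avoid $S_j$ are precisely the two lying ``opposite'' $S_j$ in the $4$-cycle of sets, and these two still share $S_{j+2}$; it is this shared set that keeps the surviving part glued into a single piece $K$. The hypothesis that edges run only between consecutive sets (the ``no other edges'' condition, together with the axis placement forbidding intra-set edges) is exactly what guarantees that each leftover $w\in S_j$ could only have connected to the rest of the graph through a neighbor in $S_{j+1}$ or $S_{j-1}$, both of which survive the deletion of $v$. Once the cyclic structure is set up correctly, no case analysis beyond choosing the index $j$ is needed, since the argument is invariant under the cyclic symmetry of the four sets.
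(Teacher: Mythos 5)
Your proof is correct, and it takes a genuinely different route from the paper's. The paper proves $2$-connectedness directly from the definition via Menger/Whitney: for each pair $u,v$ it builds a cycle through both by attaching edges $uu'$, $vv'$ landing on suitable half-axes and joining the endpoints by paths inside the two remaining quadrants, which forces a case split according to whether $u$ and $v$ lie on the same half-axis. You instead verify the equivalent condition that $\wG$ is connected and has no cut vertex: deleting any $v\in S_j$ leaves the two quadrant subgraphs avoiding $S_j$ intact and glued along the nonempty set $S_{j+2}$, and every surviving vertex of $S_j$ reattaches via a neighbor in $S_{j+1}$. This is uniform under the cyclic symmetry of the four half-axes, so no case analysis is needed, and it sidesteps the (slightly delicate) issue in the paper's argument of ensuring that the two quadrant paths together with the edges $uu'$ and $vv'$ really form a cycle. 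Both arguments rest on the same implicit nondegeneracy assumptions, which you make explicit: all four sets are nonempty, and no edge joins two vertices on the same axis (so each quadrant subgraph, being connected with at least two vertices, gives every vertex a neighbor on an adjacent half-axis). These hold in the paper's application, where the graph is bipartite with the parts on the two axes, but it is worth noting that the lemma as literally stated needs them; a degenerate instance with an empty half-axis can fail to be $2$-connected under either proof.
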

\begin{proof}
We show that any two vertices $u$ and $v$ in $\wG$ are connected by two
internally disjoint paths.  Consider the four subgraphs combined as in
Figure~\ref{k87}; the graph in each quadrant is connected.  Suppose first that
$u$ and $v$ are not on the same half-axis.  Choose edges $uu'$ and $vv'$ such
that $u'$ and $v'$ are on half-axes different from each other and from $u$ and
$v$.  Now the four vertices are on distinct half-axes, and in every case we
have chosen our two edges from the subgraphs in opposite quadrants.  Because
the remaining two quadrants are connected, we can choose a path in each to
connect the vertices we have chosen on its half-axes.  Now our four chosen
vertices lie on a cycle, which contains the two desired $u,v$-paths.

If $u$ and $v$ lie on the same half-axis, then we can choose $uu'$ and $vv'$
so that $u'$ and $v'$ are on the same neighboring half-axis.  Now the four
vertices are in the boundary of the same quadrant.  We find a $u,v$-path in
one neighboring quadrant and a $u',v'$-path in the other neighboring quadrant.
Again the four vertices lie on a cycle.
\end{proof}

To facilitate computations, we want to reduce to the critical values of $m$.
The next lemma shows that examples for smaller $m$ will cause no difficulty.

\begin{lemma}\label{2conn}
Let $H$ be a $2$-connected plane bipartite graph.  If each part in $H$ has
at least three vertices, and $v\in V(H)$, then edges can be added to
$H-v$ to obtain a $2$-connected plane bipartite graph with the same vertex
bipartition as $H-v$.
\end{lemma}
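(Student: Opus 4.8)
\textbf{The plan} is to delete $v$, track how the plane embedding changes, and then restore $2$-connectivity by inserting chords into the single face created by the deletion, keeping everything planar and bipartite. Assume without loss of generality that $v$ lies in the part $X$. Since $H$ is $2$-connected, $H-v$ is connected, and it is bipartite with parts $X\setminus\{v\}$ and $Y$, where $|X\setminus\{v\}|\ge 2$ and $|Y|\ge 3$. Because $H$ is $2$-connected, the faces of $H$ incident to $v$ are pairwise distinct (otherwise $v$ would appear twice on some facial walk and hence be a cut-vertex). Deleting $v$ therefore merges exactly these faces into a single new face $F$, while every other face of $H-v$ is inherited unchanged from $H$ and is bounded by a cycle. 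Writing $w_1,\dots,w_d$ for the neighbors of $v$ in their rotation around $v$ (distinct vertices of $Y$, with $d\ge 2$), the boundary of $F$ is the closed walk $W=P_1\cdots P_d$, where $P_i$ is the $w_i,w_{i+1}$-path left behind by the facial cycle between $vw_i$ and $vw_{i+1}$.

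Since every face of $H-v$ except $F$ is a cycle, the standard criterion that a connected plane graph on at least three vertices is $2$-connected if and only if every facial walk is a cycle shows that all cut-vertices of $H-v$ lie on $W$ and are precisely the vertices that $W$ repeats. Consequently it suffices to add non-crossing chords inside the disk bounded by $F$, each joining a vertex of $X\setminus\{v\}$ to a vertex of $Y$, so that in the augmented graph every region into which these chords cut $F$ is bounded by a cycle. The resulting plane bipartite graph then has all faces bounded by cycles, hence is $2$-connected, with the vertex set and bipartition of $H-v$ unchanged.

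I would construct these chords by induction on the number of blocks of $H-v$ (equivalently, on the total multiplicity of the repeated vertices of $W$). If $H-v$ is already $2$-connected there is nothing to do. Otherwise I choose a cut-vertex $c$ together with an endblock of the block--cut tree meeting $c$; along $W$ this endblock appears as a lobe bounded by $c$ and the two edges of $W$ leaving $c$ toward neighbors in the part opposite to $c$. I then splice the lobe to the adjacent portion of $W$ by a single chord bypassing $c$: taking the neighbor $a$ of $c$ inside the lobe and the vertex $p$ that is two steps from $c$ on the other side, so that $a$ and $p$ lie in opposite parts, I insert the chord $ap$ close to $c$. This chord stays inside $F$ without crossing the earlier ones and closes a $4$-cycle through $c$, thereby fusing two blocks and strictly decreasing the block count; iterating terminates with a single block.

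The main obstacle is making the chord choice robust in every configuration: I must guarantee that a legal endpoint $p$ exists in the required part, is distinct from and non-adjacent to $a$, and is visible to $a$ within $F$. The awkward cases are short lobes and degree-one vertices, that is, neighbors of $v$ that lose all their other edges when $v$ is deleted and so produce bridges rather than $2$-connected endblocks. This is exactly where the hypotheses $|X|\ge 3$ and $|Y|\ge 3$ enter: they ensure that after deleting $v$ each part still contains enough vertices to supply a chord endpoint of the correct parity without reusing an existing edge. Verifying that such a choice is always available, and that the chords added in successive steps remain mutually non-crossing, is the technical heart of the argument.
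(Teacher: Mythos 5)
Your overall strategy is sound and genuinely different from the paper's: you reduce $2$-connectedness to the condition that every face of the augmented plane graph is bounded by a cycle, observe that deleting $v$ merges the faces at $v$ into a single face $F$ whose boundary walk $W$ carries all the cut-vertices, and propose to subdivide $F$ by non-crossing chords joining opposite parts. The paper instead picks a single cut-vertex $w$ of $H-v$, uses the fact that $\{v,w\}$ separates $H$ to place the components of $H-\{v,w\}$ on a common face, and joins a neighbor of $v$ in each component to one vertex $x\in X$ of a designated component in a single step. Your setup (distinctness of the faces at $v$, the structure of $W$, and the criterion ``all faces are cycles'') is correct.

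However, what you have written is a plan rather than a proof: the inductive step is only sketched, and you yourself defer its verification as ``the technical heart of the argument.'' Concretely, the recipe ``let $p$ be the vertex two steps from $c$ on the other side'' fails in exactly the cases you flag as awkward. If $v$ has a neighbor of degree $2$ in $H$, that neighbor becomes a pendant vertex of $H-v$ and its block is a bridge; when such a bridge lobe $c,w,c$ lies on the chosen side of $c$, the vertex two steps from $c$ along $W$ is $c$ itself, and no legal chord $ap$ results. One must then argue that walking farther along $W$, or choosing the other side, always produces a valid endpoint; that $ap$ is not already an edge (this does follow because $a$ and the path $a,c,q,p$ would otherwise force $a$, $q$, $p$ into a common block at $c$, but you do not say so); and that the chords added in successive rounds stay inside the current face and remain pairwise non-crossing. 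It is also here, not merely in supplying ``enough vertices,'' that the hypotheses $|X|\ge 3$ and $|Y|\ge 3$ must actually be invoked. Until these points are settled the lemma is not proved; the paper's construction sidesteps the case analysis by handling all components of $H-\{v,w\}$ simultaneously through one hub vertex.
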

\begin{proof}
Let $X$ and $Y$ be the parts of the bipartition of $H$; we may assume $v\in X$.
If $H-v$ is $2$-connected, then nothing need be done.  Otherwise, $H-v$ has
a cut-vertex $w$.  Since $\{v,w\}$ is a separating set of $H$, in the
embedding of $H$ these two vertices must lie on the same face.
Since $\C X\ge3$, some component $C$ of $H-\{v,w\}$ contains a vertex of $X$
on its outside face in the embedding; let $x$ be such a vertex.  Each other
component $C'$ contains a neighbor of $v$ (in $Y$), which must lie on the
outside face of $C'$ in the embedding.  Make this vertex in each such component
$C'$ adjacent to $x$.  The resulting graph is planar, $2$-connected, and has
the same bipartition as $H-v$.
\end{proof}

\begin{lemma}\label{mmax}
For $n,r\in\NN$ with $r<n/2$, the largest $m$ such that $f(m,n)=r$ is
$\FL{\FR{2rn-4}{n-2r}}$.
\end{lemma}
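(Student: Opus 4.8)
The plan is to write $f(m,n)=\lceil g(m)\rceil$ where $g(m)=\frac{mn+4}{2m+2n}$, and to exploit the monotonicity of $g$ already noted in the preceding discussion. Since $r<n/2$ together with $r\ge 1$ forces $n\ge 3$, the sign computation for $\frac{d}{dm}g$ (or a direct comparison of consecutive terms) shows $g$ is strictly increasing in $m$; hence $f(\cdot,n)$ is nondecreasing, and the set of integers $m$ with $f(m,n)=r$ is a block of consecutive integers. Its largest element is therefore governed solely by the upper threshold $g(m)\le r$, so the whole problem reduces to solving this inequality over the integers and then checking the endpoint.

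First I would clear denominators in $g(m)\le r$. This gives $mn+4\le 2r(m+n)$, i.e. $m(n-2r)\le 2rn-4$. Because $r<n/2$ we have $n-2r\ge 1>0$, so dividing is legitimate and yields $m\le\frac{2rn-4}{n-2r}$. Consequently the largest integer satisfying $g(m)\le r$ is $M:=\lfloor\frac{2rn-4}{n-2r}\rfloor$, and since $M+1>\frac{2rn-4}{n-2r}$ we immediately obtain $g(M+1)>r$, so $f(M+1,n)\ge r+1$. Thus no $m$ exceeding $M$ has $f(m,n)=r$, which settles one half of the claim.

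The one genuinely delicate point --- and the step I expect to be the main obstacle --- is verifying that $M$ itself attains the value $r$ rather than something smaller; equivalently, that $g$ does not skip over the integer $r$. This amounts to showing $g(M)>r-1$. Clearing denominators (the denominator $n-2r+2$ is positive), this is $M>\frac{2(r-1)n-4}{n-2r+2}$. I would combine this target with the floor bound $M>\frac{2rn-4}{n-2r}-1$: it suffices to prove $\frac{2rn-4}{n-2r}-1\ge\frac{2(r-1)n-4}{n-2r+2}$. Cross-multiplying the two positive denominators reduces this to the polynomial inequality $n^2+4rn-4r^2+4r-2n-8\ge 0$. Viewed as a quadratic in $n$ it is increasing on the relevant range $n\ge 2r+1$ (its vertex lies at $n=1-2r<0$) and is already positive at $n=2r+1$, where it equals $8r^2+8r-9>0$ for $r\ge1$; hence it holds throughout $1\le r\le\frac{n-1}{2}$. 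This establishes $f(M,n)=r$, and together with the previous paragraph it shows that $M=\lfloor\frac{2rn-4}{n-2r}\rfloor$ is exactly the largest $m$ with $f(m,n)=r$.
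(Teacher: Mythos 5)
Your proof is correct and follows essentially the same route as the paper: clear denominators in $r-1<\frac{mn+4}{2m+2n}\le r$ using the positivity of $n-2r$ and $n-2(r-1)$, and read off the resulting interval for $m$. The only difference is that you additionally verify that $\lfloor\frac{2rn-4}{n-2r}\rfloor$ really lies strictly above the lower threshold $\frac{2(r-1)n+4}{n-2(r-1)}$ (via the inequality $n^2+4rn-4r^2+4r-2n-8\ge 0$, whose derivation and endpoint check at $n=2r+1$ are correct), a nonemptiness check that the paper's one-line proof leaves implicit.
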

\begin{proof}
If $\CL{\FR{mn+4}{2m+2n}}=r$, then $r-1<\FR{mn+4}{2m+2n}\le r$,
equivalent to $\FR{2n(r-1)+4}{n-2(r-1)}<m\le \FR{2nr-4}{n-2r}$.
\end{proof}

\begin{lemma}\label{critical}
If $K_{m,n}$ has a $2$-connected planar $r$-split whenever $r<n/2$ and
$m=\FL{\FR{2rn-4}{n-2r}}$, then $b(K_{m,n})=\CL{\FR{mn+4}{2m+2n}}$ for all
$m,n\in\NN$.
\end{lemma}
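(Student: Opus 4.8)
The plan is to derive the global statement from the hypothesis by two reductions: dispose of the regime $r\ge n/2$ using the decomposition already presented at the start of this section, and propagate the hypothesized construction for the largest admissible $m$ down to every smaller $m$ with the same $r$ and $n$. Throughout I use the principle recorded in the introduction: a $2$-connected planar $r$-split of $K_{m,n}$ has no cut-vertex, so by Theorem~\ref{4} it is a bar visibility graph, and its representation assigns at most $r$ bars to each vertex of $K_{m,n}$; hence such a split certifies $b(K_{m,n})\le r$. Since both $b(K_{m,n})$ and $f(m,n)$ are unchanged under interchanging $m$ and $n$, I may assume $m\ge n$, and I write $r=f(m,n)$. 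The lower bound $b(K_{m,n})\ge r$ is Theorem~\ref{1}, so it remains only to produce the splits realizing the upper bound.

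First I would treat the case $r\ge n/2$. Because $\FR{mn+4}{2m+2n}<\FR n2$ whenever $n\ge3$, one checks that $r\le\CL{\FR n2}$ always, so $r\ge n/2$ forces $r=\CL{\FR n2}$; moreover this equality holds exactly in the large-$m$ range singled out at the beginning of the section. There the decomposition of $K_{m,n}$ into the subgraphs induced by $X\cup\{y_{2i-1},y_{2i}\}$ (with $X\cup\{y_n\}$ adjoined when $n$ is odd) expresses $K_{m,n}$ as a union of $r$ bar visibility graphs (copies of $K_{m,2}$, plus one copy of $K_{m,1}$ when $n$ is odd), giving $b(K_{m,n})\le r$. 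This also absorbs all $n\le3$, so henceforth I may assume $r<n/2$ and $n\ge4$.

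For $r<n/2$, set $m^\ast=\FL{\FR{2rn-4}{n-2r}}$. By Lemma~\ref{mmax} this is the largest $m$ with $f(m,n)=r$, so the value at hand satisfies $n\le m\le m^\ast$. The hypothesis furnishes a $2$-connected planar $r$-split $H^\ast$ of $K_{m^\ast,n}$; being a split of a bipartite graph, $H^\ast$ is bipartite, with one part consisting of the copies of the $X$-labels and the other of the copies of the $Y$-labels. To descend to a given $m<m^\ast$, I would delete from $H^\ast$, one vertex at a time, all copies of the surplus labels $x_{m+1},\ldots,x_{m^\ast}$, after each deletion invoking Lemma~\ref{2conn} to restore $2$-connectedness by adding edges. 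Each deletion removes a copy of some $x_i$ and therefore destroys no adjacency between a surviving $X$-label and a $Y$-label; bipartiteness forbids any spurious edge inside a part; and every edge that Lemma~\ref{2conn} inserts joins the two parts, hence is a permitted adjacency of the complete bipartite target and adds no new vertex, so no label's multiplicity ever exceeds $r$. Because the copies of $x_1,\ldots,x_m$ and of $y_1,\ldots,y_n$ are never deleted, each part retains at least $m\ge n\ge4$ distinct vertices throughout, so the hypotheses of Lemma~\ref{2conn} remain in force. When the last surplus copy is removed, the result is a $2$-connected planar $r$-split of $K_{m,n}$, yielding $b(K_{m,n})\le r$ and, with the lower bound, the claimed equality.

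The step I expect to require the most care is the invariant maintenance during the peeling: one must simultaneously verify that the full set of $X$--$Y$ adjacencies survives each vertex deletion, that the edges supplied by Lemma~\ref{2conn} neither raise any label above $r$ copies nor violate bipartiteness, and that both parts stay large enough for Lemma~\ref{2conn} to apply. None of these is deep, but they must all hold at every one of the up to $r(m^\ast-m)$ intermediate stages; once this bookkeeping is secured, the reduction to the critical value $m=m^\ast$ is complete and the lemma follows.
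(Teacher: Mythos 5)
Your proposal is correct and follows essentially the same route as the paper: reduce to the largest $m$ with $f(m,n)=r$ via Lemma~\ref{mmax}, then repeatedly delete the copies of a surplus vertex of $X$ and restore $2$-connectedness with Lemma~\ref{2conn}, handling the regime $r=\lceil n/2\rceil$ by the explicit decomposition into copies of $K_{m,2}$ (and $K_{m,1}$). Your write-up merely makes explicit some bookkeeping (bipartiteness of the added edges, label multiplicities, part sizes) and the $r\ge n/2$ reduction that the paper disposes of in the text preceding the lemma rather than inside its proof.
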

\begin{proof}
The given $r$-split $G$ yields the claim for $m=\CL{\FR{mn+4}{2m+2n}}$, where
the parts of $K_{m,n}$ are $X$ and $Y$ with $\C X=m$.  By Lemma~\ref{mmax},
we do not need $r$-split for this $n$ with larger $m$.  For smaller $m$,
Lemma~\ref{2conn} allows us iteratively to delete the copies in $G$ of one
vertex of $X$, restoring $2$-connectedness after each vertex deletion.  We
have an $r$-split of the resulting complete bipartite graph.
Hence $b(K_{m,n})\le r$ for all $m$ where $b(K_{m,n})\le r$ is desired.
\end{proof}

Although the details of the construction differ for even and odd $n$, the main
idea is the same, so we can introduce some common notation.

\begin{definition}\label{axes}
For ease of illustration, we squeeze each half-plane into a strip, drawing its
three axis rays along horizontal lines (see Figure~\ref{k1410}).  The vertices
receiving labels in $Y$ are the first $\CL{rn/2}$ integer points on the
positive vertical axis and the first $\FL{rn/2}$ integer points on the negative
vertical axis, called $B^+$ and $B^-$, respectively.  Starting from the origin,
label $B^+$ in order using $\VEC y1n$, through increasing indices cyclically
modulo $n$.  Similarly label $B^-$, but start with $y_{\CL{n/2}+1}$ and again
continue increasing through indices modulo $n$ (see Figure~\ref{k1410}).  The
last labels on $B^+$ and $B^-$ are $\{y_n,y_{\CL{n/2}}\}$, with $y_n$ ending
$B^+$ if $r$ is even and $B^-$ if $r$ is odd.  Each label $y_i$ is used exactly
$r$ times.  The vertices with labels in $X$ are placed at integer points on the
horizontal axis, with $A^+$ and $A^-$ respectively denoting the sets of
positive and negative points used.  Let $A=A^+\cup A^-$ and $B=B^+\cup B^-$.
\end{definition}

\section{The case of even $n$}

As seen in Figure~\ref{k1410}, most vertices in $A$ will have two consecutive
neighbors in $B^+$ and in $B^-$; vertices in the middle row for the horizontal
axis receive two neighbors above and below.  For now ignore the edges added
there for $x_{13}$ and $x_{14}$.  The main part of the construction consists of
special building blocks that enable most vertices in $X$ to hit $n-2(r-s)$
labels in $Y$ using $s$ vertices on the horizontal axis.  In Figure~\ref{k1410}
these use four vertices in $A$ and five vertices in $B^+$ above and $B^-$ below.
Throughout this section, $n$ is even.

\begin{figure}[htp]
\begin{center}
\begin{tikzpicture}
[xscale=1.25]
\draw (1,0) node {$B^+$};
\filldraw [black] (2,0) circle (1.4pt)
                   (2.6,0) circle (1.4pt)
                  (3.2,0) circle (1.4pt)
                   (3.8,0) circle (1.4pt)
                   (4.4,0) circle (1.4pt)
                   (5,0) circle (1.4pt)
                   (5.6,0) circle (1.4pt)
                   (6.2,0) circle (1.4pt)
                   (6.8,0) circle (1.4pt)
                   (7.4,0) circle (1.4pt)
                  (8,0) circle (1.4pt)
                  (8.6,0) circle (1.4pt)
                    (9.2,0) circle (1.4pt)
                   (9.8,0) circle (1.4pt)
                   (10.4,0) circle (1.4pt);

\draw (2,0.4)   node {\small 1}
      (2.6,0.4) node {\small 2}
      (3.2,0.4) node {\small 3}
      (3.8,0.4) node {\small 4}
      (4.4,0.4) node {\small 5}
      (5,0.4)   node {\small 6}
      (5.6,0.4) node {\small 7}
      (6.2,0.4) node {\small 8}
      (6.8,0.4) node {\small 9}
      (7.4,0.4) node {\small 10}
      (8,0.4)   node {\small 1}
      (8.6,0.4) node {\small 2}
      (9.2,0.4) node {\small 3}
      (9.8,0.4) node {\small 4}
      (10.4,0.4) node {\small 5};

\draw (1,-1) node {$A^+$};
\filldraw [black] (2.3,-1) circle (1.4pt)
                   (2.9,-1) circle (1.4pt)
                  (3.5,-1) circle (1.4pt)
                   (4.1,-1) circle (1.4pt)
                   (4.7,-1) circle (1.4pt)
                   (5.3,-1) circle (1.4pt)
                   (5.9,-1) circle (1.4pt)
                   (6.5,-1) circle (1.4pt)
                   (7.1,-1) circle (1.4pt)
                   (7.7,-1) circle (1.4pt)
                  (8.3,-1) circle (1.4pt)
                  (8.9,-1) circle (1.4pt)
                    (9.8,-1) circle (1.4pt);

\draw (2.1,-1) node {\small 1}
      (2.7,-1) node {\small 2}
      (3.3,-1) node {\small 1}
      (3.9,-1) node {\small 2}
      (4.5,-1) node {\small 3}
      (5.1,-1) node {\small 4}
      (5.7,-1) node {\small 3}
      (6.3,-1) node {\small 4}
      (6.9,-1) node {\small 5}
      (7.5,-1) node {\small 6}
      (8.1,-1) node {\small 5}
      (8.7,-1) node {\small 6}
      (9.5,-1) node {\small 13};

\draw (1,-2) node {$B^-$};
\filldraw [black] (2,-2) circle (1.4pt)
                   (2.6,-2) circle (1.4pt)
                  (3.2,-2) circle (1.4pt)
                   (3.8,-2) circle (1.4pt)
                   (4.4,-2) circle (1.4pt)
                   (5,-2) circle (1.4pt)
                   (5.6,-2) circle (1.4pt)
                   (6.2,-2) circle (1.4pt)
                   (6.8,-2) circle (1.4pt)
                   (7.4,-2) circle (1.4pt)
                  (8,-2) circle (1.4pt)
                  (8.6,-2) circle (1.4pt)
                    (9.2,-2) circle (1.4pt)
                   (9.8,-2) circle (1.4pt)
                   (10.4,-2) circle (1.4pt);

\draw (2,-2.4) node {\small 6}
      (2.6,-2.4) node {\small 7}
      (3.2,-2.4) node {\small 8}
      (3.8,-2.4) node {\small 9}
      (4.4,-2.4) node {\small 10}
      (5,-2.4) node {\small 1}
      (5.6,-2.4) node {\small 2}
      (6.2,-2.4) node {\small 3}
      (6.8,-2.4) node {\small 4}
      (7.4,-2.4) node {\small5}
      (8,-2.4) node {\small 6}
      (8.6,-2.4) node {\small 7}
      (9.2,-2.4) node {\small 8}
      (9.8,-2.4) node {\small 9}
      (10.4,-2.4) node {\small 10};

\draw(2.3,-1)-- (2,0);\draw(2.3,-1)-- (2.6,0);\draw(2.3,-1)-- (2,-2);\draw(2.3,-1)-- (2.6,-2);
\draw(2.9,-1)-- (2.6,0);\draw(2.9,-1)-- (3.2,0);\draw(2.9,-1)-- (2.6,-2);\draw(2.9,-1)-- (3.2,-2);
\draw(3.5,-1)-- (3.2,0);\draw(3.5,-1)-- (3.8,0);\draw(3.5,-1)-- (3.2,-2);\draw(3.5,-1)-- (3.8,-2);
\draw(4.1,-1)-- (3.8,0);\draw(4.1,-1)-- (4.4,0);\draw(4.1,-1)-- (3.8,-2);\draw(4.1,-1)-- (4.4,-2);
\draw(4.7,-1)-- (4.4,0);\draw(4.7,-1)-- (5,0);\draw(4.7,-1)-- (4.4,-2);\draw(4.7,-1)-- (5,-2);
\draw(5.3,-1)-- (5,0);\draw(5.3,-1)-- (5.6,0);\draw(5.3,-1)-- (5,-2);\draw(5.3,-1)-- (5.6,-2);
\draw(5.9,-1)-- (5.6,0);\draw(5.9,-1)-- (6.2,0);\draw(5.9,-1)-- (5.6,-2);\draw(5.9,-1)-- (6.2,-2);
\draw(6.5,-1)-- (6.2,0);\draw(6.5,-1)-- (6.8,0);\draw(6.5,-1)-- (6.2,-2);\draw(6.5,-1)-- (6.8,-2);
\draw(7.1,-1)-- (6.8,0);\draw(7.1,-1)-- (7.4,0);\draw(7.1,-1)-- (6.8,-2);\draw(7.1,-1)-- (7.4,-2);
\draw(7.7,-1)-- (7.4,0);\draw(7.7,-1)-- (8,0);\draw(7.7,-1)-- (7.4,-2);\draw(7.7,-1)-- (8,-2);
\draw(8.3,-1)-- (8,0);\draw(8.3,-1)-- (8.6,0);\draw(8.3,-1)-- (8,-2);\draw(8.3,-1)-- (8.6,-2);
\draw(8.9,-1)-- (8.6,0);\draw(8.9,-1)-- (9.2,0);\draw(8.9,-1)-- (8.6,-2);\draw(8.9,-1)-- (9.2,-2);
\draw(9.8,-1)-- (9.8,0);\draw(9.8,-1)-- (9.2,0);\draw(9.8,-1)-- (9.8,-2);\draw(9.8,-1)-- (9.2,-2);
\draw(9.8,-1)-- (10.4,0);\draw(9.8,-1)-- (10.4,-2);
\end{tikzpicture}
\end{center}

\begin{center}
\begin{tikzpicture}
[xscale=1.25]

\draw (1,0) node {$B^+$};
\filldraw [black] (2,0) circle (1.4pt)
                   (2.6,0) circle (1.4pt)
                  (3.2,0) circle (1.4pt)
                   (3.8,0) circle (1.4pt)
                   (4.4,0) circle (1.4pt)
                   (5,0) circle (1.4pt)
                   (5.6,0) circle (1.4pt)
                   (6.2,0) circle (1.4pt)
                   (6.8,0) circle (1.4pt)
                   (7.4,0) circle (1.4pt)
                  (8,0) circle (1.4pt)
                  (8.6,0) circle (1.4pt)
                    (9.2,0) circle (1.4pt)
                   (9.8,0) circle (1.4pt)
                   (10.4,0) circle (1.4pt);
\draw (2,0.4)   node {\small 1}
      (2.6,0.4) node {\small 2}
      (3.2,0.4) node {\small 3}
      (3.8,0.4) node {\small 4}
      (4.4,0.4) node {\small 5}
      (5,0.4)   node {\small 6}
      (5.6,0.4) node {\small 7}
      (6.2,0.4) node {\small 8}
      (6.8,0.4) node {\small 9}
      (7.4,0.4) node {\small 10}
      (8,0.4)   node {\small 1}
      (8.6,0.4) node {\small 2}
      (9.2,0.4) node {\small 3}
      (9.8,0.4) node {\small 4}
      (10.4,0.4) node {\small 5};

\draw (1,-1) node {$A^-$};
\filldraw [black]  (2.6,-1) circle (1.4pt)
                  (3.5,-1) circle (1.4pt)
                   (4.1,-1) circle (1.4pt)
                   (4.7,-1) circle (1.4pt)
                   (5.3,-1) circle (1.4pt)
                   (5.9,-1) circle (1.4pt)
                   (6.5,-1) circle (1.4pt)
                   (7.1,-1) circle (1.4pt)
                   (7.7,-1) circle (1.4pt)
                  (8.3,-1) circle (1.4pt)
                  (8.9,-1) circle (1.4pt)
                    (9.5,-1) circle (1.4pt)
                   (10.1,-1) circle (1.4pt);
\draw (2.3,-1) node {\small 14}
      (3.3,-1) node {\small 12}
      (3.9,-1) node {\small 11}
      (4.5,-1) node {\small 12}
      (5.1,-1) node {\small 11}
      (5.7,-1) node {\small 10}
      (6.3,-1) node {\small 9}
      (6.9,-1) node {\small 10}
      (7.5,-1) node {\small 9}
      (8.1,-1) node {\small 8}
      (8.7,-1) node {\small 7}
      (9.3,-1) node {\small 8}
      (9.9,-1) node {\small 7};

\draw (1,-2) node {$B^-$};
\filldraw [black] (2,-2) circle (1.4pt)
                   (2.6,-2) circle (1.4pt)
                  (3.2,-2) circle (1.4pt)
                   (3.8,-2) circle (1.4pt)
                   (4.4,-2) circle (1.4pt)
                   (5,-2) circle (1.4pt)
                   (5.6,-2) circle (1.4pt)
                   (6.2,-2) circle (1.4pt)
                   (6.8,-2) circle (1.4pt)
                   (7.4,-2) circle (1.4pt)
                  (8,-2) circle (1.4pt)
                  (8.6,-2) circle (1.4pt)
                    (9.2,-2) circle (1.4pt)
                   (9.8,-2) circle (1.4pt)
                   (10.4,-2) circle (1.4pt);

\draw (2,-2.4)   node {\small 6}
      (2.6,-2.4) node {\small 7}
      (3.2,-2.4) node {\small 8}
      (3.8,-2.4) node {\small 9}
      (4.4,-2.4) node {\small 10}
      (5,-2.4)   node {\small 1}
      (5.6,-2.4) node {\small 2}
      (6.2,-2.4) node {\small 3}
      (6.8,-2.4) node {\small 4}
      (7.4,-2.4) node {\small 5}
      (8,-2.4)   node {\small 6}
      (8.6,-2.4) node {\small 7}
      (9.2,-2.4) node {\small 8}
      (9.8,-2.4) node {\small 9}
      (10.4,-2.4) node {\small 10};

\draw(2.6,-1)-- (2,0);\draw(2.6,-1)-- (2.6,0);\draw(2.6,-1)-- (2,-2);\draw(2.6,-1)-- (2.6,-2);\draw(2.6,-1)-- (3.2,0);\draw(2.6,-1)-- (3.2,-2);
\draw(3.5,-1)-- (3.2,0);\draw(3.5,-1)-- (3.8,0);\draw(3.5,-1)-- (3.2,-2);\draw(3.5,-1)-- (3.8,-2);
\draw(4.1,-1)-- (3.8,0);\draw(4.1,-1)-- (4.4,0);\draw(4.1,-1)-- (3.8,-2);\draw(4.1,-1)-- (4.4,-2);
\draw(4.7,-1)-- (4.4,0);\draw(4.7,-1)-- (5,0);\draw(4.7,-1)-- (4.4,-2);\draw(4.7,-1)-- (5,-2);
\draw(5.3,-1)-- (5,0);\draw(5.3,-1)-- (5.6,0);\draw(5.3,-1)-- (5,-2);\draw(5.3,-1)-- (5.6,-2);
\draw(5.9,-1)-- (5.6,0);\draw(5.9,-1)-- (6.2,0);\draw(5.9,-1)-- (5.6,-2);\draw(5.9,-1)-- (6.2,-2);
\draw(6.5,-1)-- (6.2,0);\draw(6.5,-1)-- (6.8,0);\draw(6.5,-1)-- (6.2,-2);\draw(6.5,-1)-- (6.8,-2);
\draw(7.1,-1)-- (6.8,0);\draw(7.1,-1)-- (7.4,0);\draw(7.1,-1)-- (6.8,-2);\draw(7.1,-1)-- (7.4,-2);
\draw(7.7,-1)-- (7.4,0);\draw(7.7,-1)-- (8,0);\draw(7.7,-1)-- (7.4,-2);\draw(7.7,-1)-- (8,-2);
\draw(8.3,-1)-- (8,0);\draw(8.3,-1)-- (8.6,0);\draw(8.3,-1)-- (8,-2);\draw(8.3,-1)-- (8.6,-2);
\draw(8.9,-1)-- (8.6,0);\draw(8.9,-1)-- (9.2,0);\draw(8.9,-1)-- (8.6,-2);\draw(8.9,-1)-- (9.2,-2);
\draw(9.5,-1)-- (9.8,0);\draw(9.5,-1)-- (9.2,0);\draw(9.5,-1)-- (9.8,-2);\draw(9.5,-1)-- (9.2,-2);
\draw(10.1,-1)-- (9.8,0);\draw(10.1,-1)-- (10.4,0);\draw(10.1,-1)-- (9.8,-2);\draw(10.1,-1)-- (10.4,-2);
\end{tikzpicture}
\caption{Pattern for even $n$, shown for $K_{14,10}$ with $(r,s)=(3,2)$.}
\label{k1410}
\end{center}
\end{figure}
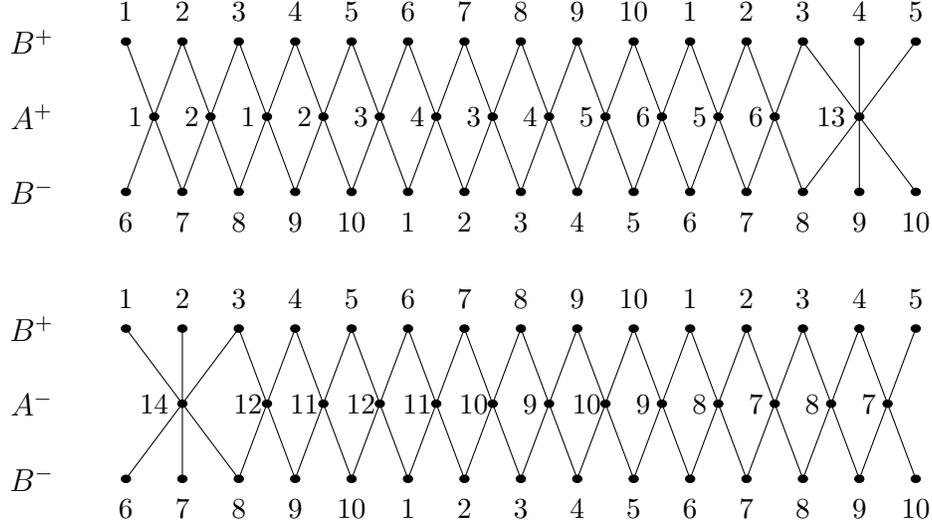

\begin{definition}\label{bricke}
An {\it opposite pair} is a pair of labels in $Y$ whose subscripts differ by
$n/2$; that is, having the form $y_i,y_{i+n/2}$, where the computation in
subscripts is viewed modulo $n$.  The labels of vertices in $B^+$ and $B^-$
at the same distance from the origin form an opposite pair.

An $i$-{\it brick} is a graph induced by $2s$ consecutive vertices in $A^+$
or $A^-$ (with alternating labels $x_{2i-1}$ and $x_{2i}$) and $2s+1$
consecutive vertices in each of $B^+$ and $B^-$ (see Figure~\ref{k1410}).
The vertices used from $B$ form opposite pairs.  The edges of the brick join
the $j$th vertex among its vertices from $A$ to the $j$th and $(j+1)$th
opposite pairs among its vertices from $B$.
\end{definition}

\begin{lemma}\label{bricke1}
When $n$ is even, the labels from $Y$ that lie on a $4$-face in an
$i$-brick form an opposite pair.  Each label for a vertex of $X$ in a brick
hits two intervals of $2s$ cyclically consecutive labels in $Y$, forming $2s$
distinct cyclically consecutive opposite pairs.  The labels missed by such a
vertex of $X$ thus also come in opposite pairs.
\end{lemma}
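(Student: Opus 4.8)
The plan is to treat a single $i$-brick as an explicitly described plane graph and to read off all three assertions directly from its layout. First I would fix local notation: write the $2s$ vertices of the brick from $A$ in left-to-right order as $a_1,\ldots,a_{2s}$, where $a_j$ carries the label $x_{2i-1}$ when $j$ is odd and $x_{2i}$ when $j$ is even, and write the $2s+1$ opposite pairs used from $B$ as $(c^+_1,c^-_1),\ldots,(c^+_{2s+1},c^-_{2s+1})$, indexed by increasing distance from the origin, so that $c^+_t$ and $c^-_t$ form an opposite pair by Definition~\ref{bricke}. With this indexing the edge rule of Definition~\ref{bricke} says precisely that $a_j$ is joined to the four vertices $c^+_j,c^+_{j+1},c^-_j,c^-_{j+1}$.

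For part (a) I would determine the faces of the brick from the drawing in Figure~\ref{k1410}. Along the horizontal axis the $B^+$ vertices and the $A$ vertices interleave in the order $c^+_1,a_1,c^+_2,a_2,\ldots,a_{2s},c^+_{2s+1}$, and the same pattern holds below with $B^-$; hence on each side the edges form a single zig-zag path, which bounds no region by itself. So every bounded face must use both half-planes, and the bounded faces are exactly the quadrilaterals $a_j\,c^+_{j+1}\,a_{j+1}\,c^-_{j+1}$ for $1\le j\le 2s-1$, since consecutive vertices $a_j,a_{j+1}$ share precisely the two neighbours $c^+_{j+1},c^-_{j+1}$. A quick Euler-formula count ($6s+2$ vertices, $8s$ edges, so $2s$ faces and $2s-1$ bounded ones) confirms that this list is complete. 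Each such $4$-face has its two $Y$-vertices equal to $c^+_{j+1}$ and $c^-_{j+1}$, an opposite pair, which is (a).

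For part (b) I would collect neighbourhoods by label. The copies of $x_{2i-1}$ are $a_1,a_3,\ldots,a_{2s-1}$, and since $a_{2k-1}$ is joined to the pairs indexed $2k-1$ and $2k$, together they meet the opposite pairs indexed $1,2,\ldots,2s$; likewise the copies of $x_{2i}$ are $a_2,a_4,\ldots,a_{2s}$ and meet the opposite pairs indexed $2,\ldots,2s+1$. In either case the label hits $2s$ cyclically consecutive opposite pairs. Converting the brick-local indices back to distances from the origin and applying the cyclic labelling of Definition~\ref{axes}, the corresponding $B^+$ labels form one cyclic interval of $2s$ consecutive elements of $Y$ and the $B^-$ labels form a second such interval, shifted by $n/2$; these are genuine intervals of distinct labels because $2s=n-2r<n$ (as $r\ge1$). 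This is exactly (b). Part (c) is then immediate: by (b) the set of labels hit by a fixed vertex of $X$ is a union of full opposite pairs, hence invariant under the involution $y_k\mapsto y_{k+n/2}$, so its complement---the set of missed labels---is also invariant and therefore splits into opposite pairs.

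The only genuinely delicate step is (a): one must be certain that the quadrilaterals listed really are faces and that there are no other $4$-faces, for instance none lying entirely between $A$ and $B^+$. I would guard against this by combining the explicit interleaving order on the axis (which shows the $A$--$B^+$ and $A$--$B^-$ edges are acyclic paths) with the Euler count, which together pin down both the number and the shape of the bounded faces and rule out any stray short face.
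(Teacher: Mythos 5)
Your expansion is faithful to the construction and, for the most part, correct: the interleaving order $c^+_1,a_1,c^+_2,\dots,a_{2s},c^+_{2s+1}$, the identification of the bounded faces as the quadrilaterals $a_j\,c^+_{j+1}\,a_{j+1}\,c^-_{j+1}$ confirmed by the Euler count, and the index bookkeeping showing that the copies of $x_{2i-1}$ (resp.\ $x_{2i}$) together meet the opposite pairs $1,\dots,2s$ (resp.\ $2,\dots,2s+1$) are all sound, and they are a legitimate unpacking of the paper's one-line proof that the claims ``follow immediately from the construction.'' Part (c) as you derive it from (b) is also fine.

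The one step that does not hold up as written is the distinctness assertion in part (b). You justify it by ``$2s=n-2r<n$ (as $r\ge 1$).'' But $2s<n$ only guarantees that a \emph{single} interval of $2s$ cyclically consecutive positions carries distinct labels of $Y$; it does not guarantee that the $2s$ cyclically consecutive \emph{opposite pairs} are distinct, nor that the $B^+$-interval and the $B^-$-interval (which are displaced from each other by $n/2$) are disjoint. Both of those facts require $2s\le n/2$: if $n/2<2s<n$, the first interval would reach into the second and some opposite pairs would repeat, so the label would hit fewer than $4s$ distinct elements of $Y$ and the count used later in Theorem~\ref{even} would break. The needed inequality is available but must be cited: Section~\ref{general} reduces to $s<n/4$ whenever $n$ is even (the exceptional case $s=r$ forces $n\equiv 3\pmod 4$), hence $2s<n/2$, which is exactly the bound the paper's own proof invokes. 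Replace your inequality with that one and the argument is complete; as it stands, the justification is too weak to yield the distinctness the lemma asserts.
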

\begin{proof}
The claims follow immediately from the construction in Definition~\ref{bricke},
because vertices in corresponding positions in the lists $B^+$ and $B^-$ form
opposite pairs.  These pairs are distinct because $2s<n/2$.
\end{proof}

%Armed with bricks, we can provide the construction for even $n$.

\begin{theorem}\label{even}
If $n$ is even, then $b(K_{m,n})=\CL{\FR{mn+4}{2m+2n}}=r$.
\end{theorem}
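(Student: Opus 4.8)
The lower bound $b(K_{m,n})\ge r$ is the content of Theorem~\ref{1}, so it remains to prove $b(K_{m,n})\le r$. The cases with $r=\CL{\FR n2}$ were already settled above by decomposing $K_{m,n}$ into copies of $K_{m,2}$, so I may assume $r<n/2$. By Lemma~\ref{critical} it then suffices to build a $2$-connected planar $r$-split of $K_{m,n}$ for the single critical value $m=\FL{\FR{2rn-4}{n-2r}}$; this split is the graph $\wG$ whose shape appears in Figure~\ref{k1410}. Because $n$ is even, $s=\CL{\FR n2}-r=\FR n2-r$, so $r+s=\FR n2$, and hence a vertex of $X$ that hits $2s$ of the $\FR n2$ opposite pairs misses exactly $\FR n2-2s=r-s$ pairs. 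This is the number of extra copies of a brick label that we can afford.

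I build $\wG$ by tiling the two half-planes with the $i$-bricks of Definition~\ref{bricke}. Consecutive bricks are laid along each horizontal ray so that neighboring bricks share one opposite pair of $B$-vertices at the seam, as in Figure~\ref{k1410}; the $i$th brick carries the labels $x_{2i-1}$ and $x_{2i}$, so each brick label occurs $s$ times. Labeling $B^+$ and $B^-$ cyclically as prescribed in Definition~\ref{axes} makes each $y_j$ occur exactly $r$ times. By Lemma~\ref{bricke1}, every brick label then hits $2s$ cyclically consecutive opposite pairs, and the $r-s$ opposite pairs it misses can be made to lie on $4$-faces. The few $X$-labels left over that do not fill a whole brick (the ``middle'' vertices $x_{13}$ and $x_{14}$ in Figure~\ref{k1410}) are placed singly at the far ends of the rays, each joined to a block of consecutive opposite pairs chosen so that the pairs it misses again lie on $4$-faces.

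Granting $\wG$, its $2$-connectedness is immediate from Lemma~\ref{2conna}: all edges run between the horizontal and the vertical axis, and each of the four subgraphs induced by $B^+\cup A^+$, $A^+\cup B^-$, $B^-\cup A^-$, and $A^-\cup B^+$ is connected, since consecutive bricks overlap at a shared $B$-vertex and the middle vertices attach to the ends. I then complete the $r$-split: for each label $x_i$ and each opposite pair it misses, I insert a new vertex labeled $x_i$ inside the corresponding $4$-face and join it to the two missed vertices of $Y$ on that face. Each such insertion keeps the graph planar and $2$-connected (a degree-$2$ vertex added inside a face of a $2$-connected plane graph), it raises the number of copies of $x_i$ to $r$, and it makes $x_i$ adjacent to all of $Y$. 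Hence the enlarged $\wG$ is a $2$-connected plane $r$-split of $K_{m,n}$, giving $b(K_{m,n})\le r$; with the lower bound this yields $b(K_{m,n})=r$.

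The step I expect to be the main obstacle is the bookkeeping compressed into the second paragraph. One must verify that the bricks together with the handful of middle vertices pack exactly into the $\CL{rn/2}$ vertices of $B^+$ and the $\FL{rn/2}$ vertices of $B^-$, that this uses precisely $m=\FL{\FR{2rn-4}{n-2r}}$ distinct $X$-labels (with the parity of $m$ absorbed by the middle vertices), and, most delicately, that every opposite pair missed by every $X$-label genuinely bounds a $4$-face of $\wG$, so that all the insertions in the final step are legal. Controlling the boundary behavior at the seams, at the origin, and at the outer middle of each ray is the crux of the argument, and is presumably why even and odd $n$ call for different building blocks.
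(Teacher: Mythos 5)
Your outline reproduces the paper's strategy exactly -- reduction to the critical $m$ via Lemma~\ref{critical}, tiling $A^+$ and $A^-$ with the $i$-bricks of Definition~\ref{bricke}, invoking Lemma~\ref{bricke1} so that missed labels come in opposite pairs on $4$-faces, Lemma~\ref{2conna} for $2$-connectedness, and degree-$2$ insertions to finish. But the part you defer in your final paragraph is not a routine verification; it is the actual content of the proof, and as written your argument has a genuine gap there. Nothing in your second paragraph establishes that the bricks and the leftover vertices fit, nor that the leftover vertices can be wired up so that their missed pairs lie on $4$-faces.

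Concretely, the paper sets $q=\FR{rn/2-1}{2s}$ and $t=\FL q$, and shows $m=\FL{4q}$, so $m=4t+j$ with $j\in\{0,1,2,3\}$ determined by which quarter of $[0,1)$ contains $q-t$. Placing $t$ bricks on each half-axis uses $1+2st\le 1+2sq=rn/2=\C{B^+}$ vertices of $B^+$, which is the packing claim you left unproved. The delicate part is then the $j\le 3$ leftover labels: the paper counts $1+2s(q-t)$ opposite pairs still visible at the seam between the outer end of $A^+$ and the inner end of $A^-$, notes that a leftover label given $p$ axis vertices must hit $s+p$ consecutive opposite pairs (since $n-2(r-p)=2s+2p$), checks distinctness via $s+2\le n/2$ (which forces the side condition $n\ge 6$, with $n=4$ handled separately -- a constraint your write-up never surfaces), and verifies availability using $q-t\ge j/4$, with an extra shared-pair argument when $j=3$ to get from $3s+2$ up to the needed $3s+4$. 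None of this is automatic, and without it you have not shown that the construction exists for the critical $m$. You correctly identified where the difficulty lives, but identifying the obstacle is not the same as overcoming it; the proof is incomplete until that counting is carried out.
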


\begin{proof}
By Lemma~\ref{critical}, we may assume $m=\FL{\FR{2rn-4}{n-2r}}$.  We have also
reduced to $r<n/2$.  With $s=n/2-r$, we have $s<n/4<r$.  Let
$q=\FR{rn/2-1}{2s}$ and $t=\FL{q}$.  We have $m=\FL{4q}$, so $m=4t+j$ for some
$j\in\{0,1,2,3\}$, where $j$ depends on which fourth of $[0,1)$ contains $q$.

We first put $i$-bricks into $A^+$, for $1\le i\le t$ (from the left in
Figure~\ref{k1410}).  The last vertex of $B^+$ in the $i$-brick is also the
first vertex in the $(i+1)$-brick (similarly for $B^-$).  Thus these bricks use
$1+2st$ vertices from $B^+$ (and $B^-$).  Since $\C{B^+}=rn/2=1+2sq$, there is
room for these bricks.  Similarly, working inward from the outer face (from the
right in Figure~\ref{k1410}), we put $i$-bricks into $A^-$ for $t+1\le i\le 2t$.
Counting the last vertex of the $t$-brick, the number of vertices remaining
visible to unused vertices of $A^+$ at the right end of $B^+$ and $B^-$
is $(rn/2)-2st$, which equals $1+2s(q-t)$.  Similarly, this many vertices are
visible to unused vertices of $A^-$ at the left end.

Since all opposite pairs remain available on the faces with $A^+$ involving the
first $n/2$ vertices in $B^+$ and $B^-$, we have now satisfied the vertices
$\VEC x1{4t}$.  Each such label has been used $s$ times and hit $4s$ labels in
$Y$.  Since $n-4s=2(r-s)$ and the $2(r-s)$ missed labels occur in opposite
pairs, we can add $r-s$ vertices with this label in the appropriate faces to
hit the remaining $2(r-s)$ labels in $Y$.

Since $m=4t+j$, there remain $j$ vertices to process in $X$,
where $j\le 3$ (none if $j=0$; the example in Figure~\ref{k1410} has $j=2$).
Note that the opposite pair in $B^+$ and $B^-$ that is seen from the left
(inner) end of $A^-$ cyclically follows the opposite pair seen from the right
(outer) end of $A^+$.  Thus if a label in $X$ sees consecutive pairs in
$B^+\cup B^-$ using vertices in $A^+$, or in $A^-$, or from the end of $A^+$
and beginning of $A^-$, then the labels in $Y$ hit by that vertex will be
distinct as long as the number of pairs is at most $n/2$.

When $j$ is odd, $x_m$ will receive one vertex at the end of $A^+$ and one
at the beginning of $A^-$.  When $j\ge2$, we assign one vertex in $A^+$ to
$x_{m-j+1}$ and one vertex in $A^-$ to $x_{m-j+2}$.  If this gives
$p$ vertices to a vertex $x_i$ and each of the remaining $r-p$ vertices for
$x_i$ will see one opposite pair by putting it into a $4$-face, then the
specified $p$ vertices for $x_i$ need to hit $n-2(r-p)$ labels in $Y$.  This
value equals $2s+2p$, so $x_i$ needs to hit $s+p$ consecutive opposite pairs.

For $p\le 2$, ensuring that the labels hit are distinct requires $s+2\le n/2$.
Since $s\le (n-2)/4$ when $n$ is even, it suffices to have $(n-2)/4\le n/2-2$,
which is equivalent to $n\ge6$ (and when $n=4$ we cannot have $s<n/4$).

It remains only to show that $B^+\cup B^-$ has enough such pairs available.
For $j\in\{1,2,3\}$, we need in total to hit $s+2$, $2s+2$, or $3s+4$
consecutive pairs, respectively.  We have observed that there are
$1+2s(q-t)$ opposite pairs visible both from the right end of $A^+$ and the
left end of $A^-$.  Since $q-t\ge j/4$, the $2+4s(q-t)$ pairs are at least
$s+2$, $2s+2$, and $3s+2$ for $j\in\{1,2,3\}$, respectively.  However, when
$j=3$ we are using two vertices in each of $A^+$ and $A^-$, meaning that the
last pair seen by one vertex can also be the first pair seen by the other.
This means that in total the vertices can see $3s+4$ pairs instead of $3s+2$,
which is the number needed.

%We must also ensure that the labels hit are distinct.  The pairs
%in $B^+\cup B^-$ seen by $A^+$ range from $\{y_{n/2-2s(q-t)},y_{n-2s(q-t)}\}$
%to $\{y_{n/2},y_n\}$, regardless of whether $r$ is even or odd, and the
%pairs seen by $A^-$ range from $\{y_1,y_{n/2+1}\}$ to
%$\{y_{1+2s(q-t)},y_{n/2+1+2s(q-t)}\}$, as indicated below
%$$
%\begin{matrix}
%y_{1}	&\cdots	&y_{1+2s(q-t)}&&\vert&	&y_{n/2-2s(q-t)}	&\cdots	&y_{n/2}\\
%y_{n/2+1}&\cdots&y_{n/2+1+2s(q-t)}&&\vert&	&y_{n-2s(q-t)}	&\cdots	&y_{n}
%\end{matrix}
%$$
%These labels are distinct if $2+2s(q-t)\le n/2-2s(q-t)$, which reduces to
%$2s(q-t)\le n/2-2$.  Since $q-t<1$ and $2s(q-t)$ is an integer, it suffices to
%have $2s \le n/2-1$.  Since $s<n/4$, our only worry here is $s=(n-1)/4$, but
%this cannot happen when $n$ is even.

Finally, we must ensure that the graph $\wG$ produced before adding the
excess labels in faces is $2$-connected (it is an elementary exercise that
adding vertices of degree $2$ to a $2$-connected graph preserves
$2$-connectedness).  By Lemma~\ref{2conna}, it suffices to show that the four
subgraphs induced by $B^+\cup A^+$, $A^+\cup B^-$, $B^-\cup A^-$, and
$A^-\cup B^+$ are connected.  After adding the vertices with labels
$\VEC x{m-j+1}m$ in the last step, we may have left some vertices of $B^+$ or
$B^-$ unhit.  Add edges joining $A$ and $B$ in each of the four induced
subgraphs to make them connected, while remaining planar and bipartite with
the same bipartition.  Because we are seeking a representation of a complete
bipartite graph, extra visibilities between the parts do not cause a problem.
\end{proof}

\section{The case of odd $n$}

Our general aim was to have $s$ vertices for $x_i$ hit $n-2(r-s)$ labels in
$Y$.  When $n$ is odd, this amount is odd, so we change the definition of
bricks.  They will still use $2s$ vertices from $A$, but now they will use
one less vertex each in $B^+$ and $B^-$.  Indeed, the bricks we used before
are too big to fit onto $B^+$ and $B^-$.  Throughout this section, $n$ is odd.

\begin{definition}\label{bricko}
A {\it skew pair} is a pair of labels in $Y$ whose subscripts differ by
$\FL{n/2}$; that is, having the form $y_i,y_{i+(n-1)/2}$, where the computation
in subscripts is viewed modulo $n$.

For odd $n$, an $i$-{\it brick} is a graph induced by $2s$ consecutive vertices
in $A^+$ or $A^-$ (alternating labels $x_{2i-1}$ and $x_{2i}$) and $2s$
consecutive vertices in $B^+$ and $B^-$ (see Figure~\ref{k169}).
Bricks using $A^+$ start from the left end (near the origin).  Those using
$A^-$ start from the right (not near the origin).  Vertices from $B^+$ and
$B^-$ in a brick have the same distances from the start.

Let $\{\wB,\vB\}=\{B^+,B^-\}$.
Measured from the start, the edges of a brick join the $j$th copy of $x_{2i-1}$
to the $(2j-1)$th and $2j$th vertices of the brick in $\wB$ and the
$(2j-2)$th and $(2j-1)$th vertices of the brick in $\vB$, except that the first
copy of $x_{2i-1}$ hits in $\vB$ only the first vertex.  Similarly, the $j$th
copy of $x_{2i}$ hits the $2j$th and $(2j+1)$th vertices of the brick in $\wB$
and the $(2j-1)$ and $2j$th vertices of the brick in $\vB$, except that the
$s$th copy of $x_{2i}$ hits in $\wB$ only the last vertex.  For bricks from
the left (using $A^+$), set $\wB=B^+$ and $\vB=B^-$.  For bricks from the
right (using $A^-$), let $\wB$ be the member of $\{B^+,B^-\}$ whose last label
is $y_n$, and let $\vB$ be the member whose last label is $y_{(n+1)/2}$.
\end{definition}

\begin{figure}[htp]
\begin{center}
\begin{tikzpicture}
[xscale=1.2]

\filldraw [black]  (2.0,0) circle (1.4pt)
                   (2.6,0) circle (1.4pt)
                   (3.2,0) circle (1.4pt)
                   (3.8,0) circle (1.4pt)
                   (4.4,0) circle (1.4pt)
                   (5.0,0) circle (1.4pt)
                   (5.6,0) circle (1.4pt)
                   (6.2,0) circle (1.4pt)
                   (6.8,0) circle (1.4pt)
                   (7.4,0) circle (1.4pt)
                   (8.0,0) circle (1.4pt)
                   (8.6,0) circle (1.4pt)
                   (9.2,0) circle (1.4pt)
                   (9.8,0) circle (1.4pt);

\draw (1,0) node {$B^+$};
\draw (2,0.4)   node {\small 1}
      (2.6,0.4) node {\small 2}
      (3.2,0.4) node {\small 3}
      (3.8,0.4) node {\small 4}
      (4.4,0.4) node {\small 5}
      (5,0.4)   node {\small 6}
      (5.6,0.4) node {\small 7}
      (6.2,0.4) node {\small 8}
      (6.8,0.4) node {\small 9}
      (7.4,0.4) node {\small 1}
      (8,0.4)   node {\small 2}
      (8.6,0.4) node {\small 3}
      (9.2,0.4) node {\small 4}
      (9.8,0.4) node {\small 5};

\filldraw [black]  (2.3,-1) circle (1.4pt)
                   (2.7,-1) circle (1.4pt)
                   (3.1,-1) circle (1.4pt)
                   (3.5,-1) circle (1.4pt)
                   (4.1,-1) circle (1.4pt)
                   (4.5,-1) circle (1.4pt)
                   (4.9,-1) circle (1.4pt)
                   (5.3,-1) circle (1.4pt)
                   (5.9,-1) circle (1.4pt)
                   (6.3,-1) circle (1.4pt)
                   (6.7,-1) circle (1.4pt)
                   (7.1,-1) circle (1.4pt)
                   (7.7,-1) circle (1.4pt)
                   (8.1,-1) circle (1.4pt)
                   (8.5,-1) circle (1.4pt)
                   (8.9,-1) circle (1.4pt);

\draw (1,-1) node {$A^+$};
\draw              (2.1,-1) node {\small $1$}
                   (2.5,-1) node {\small $2$}
                   (2.9,-1) node {\small $1$}
                   (3.3,-1) node {\small $2$}
                   (3.9,-1) node {\small $3$}
                   (4.3,-1) node {\small $4$}
                   (4.7,-1) node {\small $3$}
                   (5.1,-1) node {\small $4$}
                   (5.7,-1) node {\small $5$}
                   (6.1,-1) node {\small $6$}
                   (6.5,-1) node {\small $5$}
                   (6.9,-1) node {\small $6$}
                   (7.5,-1) node {\small $7$}
                   (7.9,-1) node {\small $8$}
                   (8.3,-1) node {\small $7$}
                   (8.7,-1) node {\small $8$};

\filldraw [black]  (2.0,-2) circle (1.4pt)
                   (2.6,-2) circle (1.4pt)
                   (3.2,-2) circle (1.4pt)
                   (3.8,-2) circle (1.4pt)
                   (4.4,-2) circle (1.4pt)
                   (5.0,-2) circle (1.4pt)
                   (5.6,-2) circle (1.4pt)
                   (6.2,-2) circle (1.4pt)
                   (6.8,-2) circle (1.4pt)
                   (7.4,-2) circle (1.4pt)
                   (8.0,-2) circle (1.4pt)
                   (8.6,-2) circle (1.4pt)
                   (9.2,-2) circle (1.4pt);

\draw (1,-2) node {$B^-$};
\draw (2,-2.4)   node {\small 6}
      (2.6,-2.4) node {\small 7}
      (3.2,-2.4) node {\small 8}
      (3.8,-2.4) node {\small 9}
      (4.4,-2.4) node {\small 1}
      (5,-2.4)   node {\small 2}
      (5.6,-2.4) node {\small 3}
      (6.2,-2.4) node {\small 4}
      (6.8,-2.4) node {\small 5}
      (7.4,-2.4) node {\small 6}
      (8,-2.4)   node {\small 7}
      (8.6,-2.4) node {\small 8}
      (9.2,-2.4) node {\small 9};

\draw(2.3,-1)-- (2,0);\draw(2.3,-1)-- (2.6,0);\draw(2.3,-1)-- (2,-2);
\draw(2.7,-1)-- (2.6,0);\draw(2.7,-1)-- (3.2,0);
\draw(2.7,-1)-- (2.6,-2);\draw(2.7,-1)-- (2,-2);
\draw(3.1,-1)-- (3.2,0);\draw(3.1,-1)-- (3.8,0);
\draw(3.1,-1)-- (3.2,-2);\draw(3.1,-1)-- (2.6,-2);
\draw(3.5,-1)-- (3.8,0); \draw(3.5,-1)-- (3.8,-2);\draw(3.5,-1)-- (3.2,-2);
\draw(4.1,-1)-- (3.8,0);\draw(4.1,-1)-- (3.8,-2);\draw(4.1,-1)-- (4.4,0);
\draw(4.5,-1)-- (4.4,0);\draw(4.5,-1)-- (5.0,0);
\draw(4.5,-1)-- (4.4,-2);\draw(4.5,-1)-- (3.8,-2);
\draw(4.9,-1)-- (5.6,0);\draw(4.9,-1)-- (5.0,0);
\draw(4.9,-1)-- (5.0,-2);\draw(4.9,-1)-- (4.4,-2);
\draw(5.3,-1)-- (5.6,0);\draw(5.3,-1)-- (5.0,-2);\draw(5.3,-1)-- (5.6,-2);

\draw(5.9,-1)-- (5.6,0);\draw(5.9,-1)-- (6.2,0);\draw(5.9,-1)-- (5.6,-2);
\draw(6.3,-1)-- (6.2,0);\draw(6.3,-1)-- (6.8,0);
\draw(6.3,-1)-- (6.2,-2);\draw(6.3,-1)-- (5.6,-2);
\draw(6.7,-1)-- (6.8,0);\draw(6.7,-1)-- (7.4,0);
\draw(6.7,-1)-- (6.8,-2);\draw(6.7,-1)-- (6.2,-2);
\draw(7.1,-1)-- (7.4,0); \draw(7.1,-1)-- (7.4,-2);\draw(7.1,-1)-- (6.8,-2);
\draw(7.7,-1)-- (7.4,0);\draw(7.7,-1)-- (7.4,-2);\draw(7.7,-1)-- (8.0,0);
\draw(8.1,-1)-- (8.0,0);\draw(8.1,-1)-- (8.6,0);
\draw(8.1,-1)-- (8.0,-2);\draw(8.1,-1)-- (7.4,-2);
\draw(8.5,-1)-- (9.2,0);\draw(8.5,-1)-- (8.6,0);
\draw(8.5,-1)-- (8.6,-2);\draw(8.5,-1)-- (8.0,-2);
\draw(8.9,-1)-- (9.2,0);\draw(8.9,-1)-- (8.6,-2);\draw(8.9,-1)-- (9.2,-2);

\draw[dashed](8.9,-1)-- (9.8,0);
\end{tikzpicture}
\end{center}

\begin{center}
\begin{tikzpicture}
[xscale=1.2]

\draw (1,0) node {$B^+$};\draw (1,-1) node {$A^-$};\draw (1,-2) node {$B^-$};

\filldraw [black] (2.0,0) circle (1.4pt)
                   (2.6,0) circle (1.4pt)
                   (3.2,0) circle (1.4pt)
                   (3.8,0) circle (1.4pt)
                   (4.4,0) circle (1.4pt)
                   (5.0,0) circle (1.4pt)
                   (5.6,0) circle (1.4pt)
                   (6.2,0) circle (1.4pt)
                   (6.8,0) circle (1.4pt)
                   (7.4,0) circle (1.4pt)
                   (8.0,0) circle (1.4pt)
                   (8.6,0) circle (1.4pt)
                   (9.2,0) circle (1.4pt)
                   (9.8,0) circle (1.4pt);

\draw (2,0.4)   node {\small 1}
      (2.6,0.4) node {\small 2}
      (3.2,0.4) node {\small 3}
      (3.8,0.4) node {\small 4}
      (4.4,0.4) node {\small 5}
      (5,0.4)   node {\small 6}
      (5.6,0.4) node {\small 7}
      (6.2,0.4) node {\small 8}
      (6.8,0.4) node {\small 9}
      (7.4,0.4) node {\small 1}
      (8,0.4)   node {\small 2}
      (8.6,0.4) node {\small 3}
      (9.2,0.4) node {\small 4}
      (9.8,0.4) node {\small 5};

\filldraw [black]  (2.9,-1) circle (1.4pt)
                   (3.3,-1) circle (1.4pt)
                   (3.7,-1) circle (1.4pt)
                   (4.1,-1) circle (1.4pt)
                   (4.7,-1) circle (1.4pt)
                   (5.1,-1) circle (1.4pt)
                   (5.5,-1) circle (1.4pt)
                   (5.9,-1) circle (1.4pt)
                   (6.5,-1) circle (1.4pt)
                   (6.9,-1) circle (1.4pt)
                   (7.3,-1) circle (1.4pt)
                   (7.7,-1) circle (1.4pt)
                   (8.3,-1) circle (1.4pt)
                   (8.7,-1) circle (1.4pt)
                   (9.1,-1) circle (1.4pt)
                   (9.5,-1) circle (1.4pt);

\draw              (2.6,-1) node {\small $16$}
                   (3.1,-1) node {\small $15$}
                   (3.5,-1) node {\small $16$}
                   (3.9,-1) node {\small $15$}
                   (4.5,-1) node {\small $14$}
                   (4.9,-1) node {\small $13$}
                   (5.3,-1) node {\small $14$}
                   (5.7,-1) node {\small $13$}
                   (6.3,-1) node {\small $12$}
                   (6.7,-1) node {\small $11$}
                   (7.1,-1) node {\small $12$}
                   (7.5,-1) node {\small $11$}
                   (8.1,-1) node {\small $10$}
                   (8.5,-1) node {\small $9$}
                   (8.9,-1) node {\small $10$}
                   (9.3,-1) node {\small $9$};

\filldraw [black]  (2.6,-2) circle (1.4pt)
                   (3.2,-2) circle (1.4pt)
                   (3.8,-2) circle (1.4pt)
                   (4.4,-2) circle (1.4pt)
                   (5.0,-2) circle (1.4pt)
                   (5.6,-2) circle (1.4pt)
                   (6.2,-2) circle (1.4pt)
                   (6.8,-2) circle (1.4pt)
                   (7.4,-2) circle (1.4pt)
                   (8.0,-2) circle (1.4pt)
                   (8.6,-2) circle (1.4pt)
                   (9.2,-2) circle (1.4pt)
                   (9.8,-2) circle (1.4pt);

\draw (2.6,-2.4) node {\small 6}
      (3.2,-2.4) node {\small 7}
      (3.8,-2.4) node {\small 8}
      (4.4,-2.4) node {\small 9}
      (5,-2.4)   node {\small 1}
      (5.6,-2.4) node {\small 2}
      (6.2,-2.4) node {\small 3}
      (6.8,-2.4) node {\small 4}
      (7.4,-2.4) node {\small 5}
      (8,-2.4)   node {\small 6}
      (8.6,-2.4) node {\small 7}
      (9.2,-2.4) node {\small 8}
      (9.8,-2.4) node {\small 9};

\draw(2.9,-1)-- (2.6,-2); \draw(2.9,-1)-- (3.2,-2);  \draw(2.9,-1)-- (2.6,0);
\draw(3.3,-1)-- (3.2,-2); \draw(3.3,-1)-- (3.8,-2);
\draw(3.3,-1)-- (3.2,0);  \draw(3.3,-1)-- (2.6,0);
\draw(3.7,-1)-- (3.8,-2); \draw(3.7,-1)-- (4.4,-2);
\draw(3.7,-1)-- (3.8,0);  \draw(3.7,-1)-- (3.2,0);
\draw(4.1,-1)-- (4.4,-2); \draw(4.1,-1)-- (4.4,0);  \draw(4.1,-1)-- (3.8,0);

\draw(4.7,-1)-- (4.4,-2); \draw(4.7,-1)-- (4.4,0);  \draw(4.7,-1)-- (5.0,-2);
\draw(5.1,-1)-- (5.0,-2); \draw(5.1,-1)-- (5.6,-2);
\draw(5.1,-1)-- (5,0);    \draw(5.1,-1)-- (4.4,0);
\draw(5.5,-1)-- (6.2,-2); \draw(5.5,-1)-- (5.6,-2);
\draw(5.5,-1)-- (5.6,0);  \draw(5.5,-1)-- (5,0);
\draw(5.9,-1)-- (6.2,-2); \draw(5.9,-1)-- (5.6,0);  \draw(5.9,-1)-- (6.2,0);

\draw(6.5,-1)-- (6.2,-2); \draw(6.5,-1)-- (6.8,-2);  \draw(6.5,-1)-- (6.2,0);
\draw(6.9,-1)-- (6.8,-2); \draw(6.9,-1)-- (7.4,-2);
\draw(6.9,-1)-- (6.8,0);  \draw(6.9,-1)-- (6.2,0);
\draw(7.3,-1)-- (7.4,-2); \draw(7.3,-1)-- (8.0,-2);
\draw(7.3,-1)-- (7.4,0);  \draw(7.3,-1)-- (6.8,0);
\draw(7.7,-1)-- (8.0,-2); \draw(7.7,-1)-- (8,0);\draw(7.7,-1)-- (7.4,0);

\draw(8.3,-1)-- (8.0,-2); \draw(8.3,-1)-- (8,0);\draw(8.3,-1)-- (8.6,-2);
\draw(8.7,-1)-- (8.6,-2); \draw(8.7,-1)-- (9.2,-2);
\draw(8.7,-1)-- (8.6,0);  \draw(8.7,-1)-- (8,0);
\draw(9.1,-1)-- (9.8,-2); \draw(9.1,-1)-- (9.2,-2);
\draw(9.1,-1)-- (9.2,0);  \draw(9.1,-1)-- (8.6,0);
\draw(9.5,-1)-- (9.8,-2); \draw(9.5,-1)-- (9.2,0);\draw(9.5,-1)-- (9.8,0);

\draw[dashed](2.9,-1)-- (2,0);

\end{tikzpicture}
\caption{Pattern for odd $n$, shown for $K_{16,9}$ with $(r,s)=(3,2)$.}
\label{k169}
\end{center}
\end{figure}
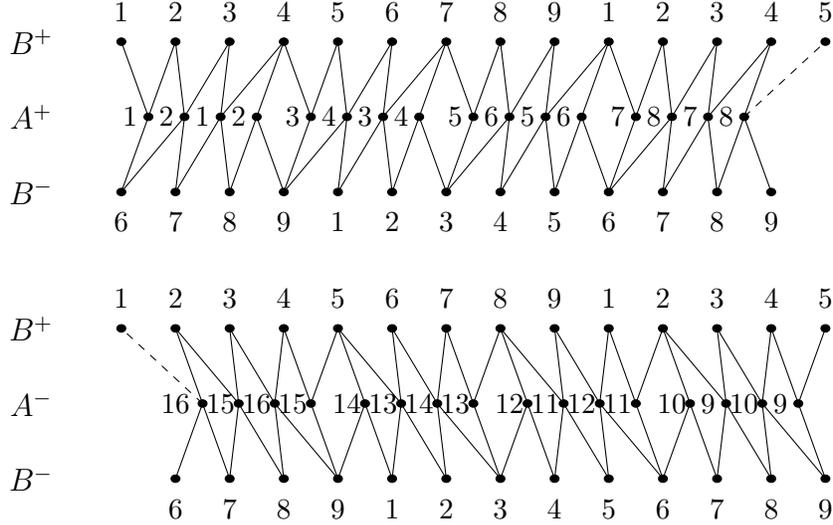

\begin{lemma}\label{bricko1}
When $n$ is odd, the labels from $Y$ that lie on a $4$-face in an $i$-brick
form a skew pair.  Each label for a vertex of $X$ in a brick hits two intervals
of $2s-2$ cyclically consecutive labels in $Y$, forming $2s-2$ cyclically
consecutive skew pairs, plus one more label at the end of one of those
intervals.  These labels are distinct.  The labels missed by such a vertex of
$X$ also come in skew pairs.
\end{lemma}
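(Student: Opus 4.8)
The plan is to read off all three assertions directly from Definition~\ref{bricko}, exactly as Lemma~\ref{bricke1} was read off from Definition~\ref{bricke}; the only genuinely new feature is the one-position skew between the $\wB$- and $\vB$-neighbors of each copy, and essentially all of the work is to check that this skew yields the subscript difference $\FL{n/2}$ rather than $\CL{n/2}$. First I would record the index arithmetic. By Definition~\ref{axes}, in a brick drawn from the left the $k$th vertex of $B^+$ carries the label $y_k$ and the $k$th vertex of $B^-$ carries $y_{k+\CL{n/2}}$, subscripts read cyclically modulo $n$. Hence two vertices of $B$ equidistant from the origin differ in subscript by $\CL{n/2}=(n+1)/2$, whereas a vertex of $\wB$ and a vertex of $\vB$ whose brick-positions differ by exactly one differ in subscript by $\CL{n/2}-1=\FL{n/2}=(n-1)/2$; that is, such a pair is a skew pair. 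For bricks drawn from the right the members of $\{B^+,B^-\}$ play the roles of $\wB$ and $\vB$ according to which ends in $y_n$, so after the renaming prescribed in Definition~\ref{bricko} the computation is identical, and I would treat only a left brick.

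For the first assertion I would locate the $4$-faces. Any two consecutive copies in $A$ of the brick share exactly one neighbor in $\wB$ and exactly one in $\vB$ (immediate from the two edge rules, whether the consecutive pair is $x_{2i-1},x_{2i}$ or $x_{2i},x_{2i-1}$), and these two common vertices together with the two $A$-vertices bound the $4$-face between them. Comparing the prescribed positions in each case shows the shared $\wB$-vertex lies one position farther from the start than the shared $\vB$-vertex, so by the arithmetic above its two $Y$-labels form a skew pair. The two boundary exceptions---the first copy of $x_{2i-1}$ omitting a $\vB$-neighbor and the $s$th copy of $x_{2i}$ omitting a $\wB$-neighbor---only suppress the would-be $4$-faces at the two ends of the brick and create no others, so they do not affect the claim.

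For the remaining assertions I would fix a label of $X$, say $x_{2i-1}$, and union the neighborhoods of its $s$ copies. The edge rule shows these neighborhoods cover the positions $1,\dots,2s$ of $\wB$ and the positions $1,\dots,2s-1$ of $\vB$ (the only gap being the position $0$ suppressed for the first copy), so $x_{2i-1}$ hits one interval of $2s$ consecutive labels in $\wB$ and one interval of $2s-1$ consecutive labels in $\vB$. Matching $\vB$-position $k$ with $\wB$-position $k+1$ turns these into $2s-1$ skew pairs together with a single leftover label at the inner end of the $\wB$-interval; the mirror-image count holds for $x_{2i}$, with the leftover at the outer end of its $\vB$-interval. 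Distinctness of all the labels hit is precisely the statement that the two intervals neither overlap nor wrap around $Y$, which follows from $2s<n/2$ (equivalently $s<n/4$), already available. For the last assertion I would note that the subscripts missed by $x_{2i-1}$ form two cyclic intervals, one just past the $\wB$-interval and one just past the $\vB$-interval; a one-line count gives both the common length $r-s=(n+1)/2-2s$, and the skew shift $y_t\mapsto y_{t+\FL{n/2}}$ carries one onto the other, so the missed labels split into $r-s$ skew pairs.

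The only real obstacle is bookkeeping. One must pin down the planar embedding carefully enough to be certain which faces are the $4$-faces and to dispose correctly of the two boundary exceptions, and one must keep the cyclic arithmetic modulo $n$ straight so that the ``$+\CL{n/2}$'' offset between $B^+$ and $B^-$ and the ``$-1$'' positional skew combine to the skew-pair difference $\FL{n/2}$ and not to a value off by one. Once these offsets are reconciled, each assertion is an immediate consequence of Definition~\ref{bricko}, paralleling the derivation of Lemma~\ref{bricke1} from Definition~\ref{bricke}.
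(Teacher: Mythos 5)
There is a genuine gap in your distinctness step. You claim that the labels hit by a vertex of $X$ are distinct because ``the two intervals neither overlap nor wrap around $Y$, which follows from $2s<n/2$ (equivalently $s<n/4$), already available.'' For odd $n$ that bound is \emph{not} always available: Section~\ref{general} gives $s<n/4$ only outside the case $s=r$, and in Theorem~\ref{odd} we only have $s\le\FR{n+1}4$, with equality occurring exactly when $n\equiv3\bmod4$ and $m\in\{n,n+1\}$. In that case $2s=\FR{n+1}2>n/2$, and the two label intervals of a brick do collide: the first label of $B^+$ in the brick coincides with the last label of $B^-$ in the brick (this already happens in $K_{8,7}$, the paper's very first illustration, where $n=7$ and $s=r=2$). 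Your argument as written fails there. The paper's proof must and does handle this explicitly: it observes that the $4s$ brick labels are distinct \emph{unless} $2s=(n+1)/2$, and that in the exceptional case the two coinciding labels are the first vertex of $\wB$ and the last vertex of $\vB$, which by the endpoint omissions in Definition~\ref{bricko} are hit only by $x_{2i-1}$ and only by $x_{2i}$ respectively---so each individual label of $X$ still hits $4s-1$ distinct labels. You noticed these ``two boundary exceptions'' but used them only to discount spurious $4$-faces; their second, essential role is precisely to rescue distinctness in the extremal case, and that observation is missing from your proposal.

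A secondary, smaller point: you dismiss the bricks on $A^-$ with ``after the renaming the computation is identical,'' whereas the paper carries out the subscript arithmetic for the right bricks separately (there $\wB$ starts at $y_n$ and $\vB$ at $y_{(n+1)/2}$, so the labels decrease with position and the offset between corresponding positions is $\FL{n/2}$ rather than $\CL{n/2}$). The conclusion is the same, but it is not literally the same computation, and the sign of the shift matters when you later match the missed intervals by the map $y_t\mapsto y_{t+\FL{n/2}}$; as the paper notes, depending on which end of the longer missed interval the extra hit label removes, the matching may need the shift in the opposite direction. These are verifications rather than new ideas, but the distinctness issue above is a real hole that must be filled before the lemma is proved.
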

\begin{proof}
The labels on a $4$-face in a brick are the $j$th vertex of $\wB$ and $(j+1)$th
of $\vB$ from the start.  Since $B^+$ starts with $y_1$ and $B^-$ starts with
$y_{(n+3)/2}$, for bricks using $A^+$ the two labels on the face are $y_{j+1}$
and $y_{j+(n+1)/2}$, which form a skew pair taking subscripts modulo $n$.
For bricks using $A^-$, starting from the other end, $\wB$ starts with $y_n$
and $\vB$ starts with $y_{(n+1)/2}$.  (As specified in Definition~\ref{axes},
$y_n$ ends $B^+$ if $r$ is even and $B^-$ if $r$ is odd.)
On a $4$-face in such a brick we have $y_{n-j}$ and $y_{(n+1)/2-j-1}$.
Since $(n-1)/2-j\equiv n-j+(n-1)/2\mod{n}$, again we have a skew pair.

Since corresponding positions in $B^+$ and $B^-$ are labeled by skew pairs,
the $4s$ labels in $Y$ occurring in a brick are distinct unless $2s=(n+1)/2$,
which can occur when $n\equiv3\mod 4$ and $s=r=(n+1)/4$.  In this case,
the first label from $B^+$ is the same as the last label from $B^-$ in a
brick.  However, as constructed in Definition~\ref{bricko}, the first label
from $B^+$ is hit only by $x_{2i-1}$, and the last label from $B^-$ is hit only
by $x_{2i}$ in the brick, so each label from $X$ still hits $4s-1$ distinct
labels in $Y$.

These $4s-1$ distinct labels group into $2s-1$ cyclically consecutive skew
pairs plus one more label.  The two intervals of labels hit by the pairs leave
two intervals of labels missed, and the lengths of the intervals of missed
labels are $(n+1)/2-s+1$ and $(n-1)/2-s+1$.  The extra label hit by $x_i$ is at
the end of the longer interval.  No matter which end of the longer interval
it shortens, the remaining missed labels match up as skew pairs.
\end{proof}

The approach to the construction is the same as for even $n$ in
Theorem~\ref{even}, but the technical details are different.

\begin{theorem}\label{odd}
If $n$ is odd, then $b(K_{m,n})=\CL{\FR{mn+4}{2m+2n}}=r$.
\end{theorem}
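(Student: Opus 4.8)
The plan is to build a $2$-connected planar $r$-split of $K_{m,n}$ from the odd bricks of Definition~\ref{bricko}, following the architecture of Theorem~\ref{even} but replacing opposite pairs by skew pairs and invoking Lemma~\ref{bricko1} in place of Lemma~\ref{bricke1}; such a split is a bar visibility graph, so together with the Euler lower bound of Theorem~\ref{1} it yields $b(K_{m,n})=r$. By Lemma~\ref{critical} it suffices to realize the critical value $m=\FL{\FR{2rn-4}{n-2r}}$ with $r<n/2$, and I set $s=\CL{\FR n2}-r=\FR{n+1}2-r$, so that $n-2r=2s-1$. As recorded before Definition~\ref{axes}, here $s\le r$; moreover $s=r$ forces $n\equiv3\pmod4$, and then the critical $m$ equals $n+1$ (the situation of Figure~\ref{k87}), while otherwise $s<n/4<r$. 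I first treat the generic range $s<r$. Letting $t$ be the number of bricks placed on each side, governed by $\C{B^+}=\CL{rn/2}$, $\C{B^-}=\FL{rn/2}$, and the brick width $2s$, I write $m=4t+j$ with $j\in\{0,1,2,3\}$ and place $i$-bricks into $A^+$ for $1\le i\le t$ and into $A^-$ for $t+1\le i\le 2t$, checking that they tile these rays with consecutive bricks abutting.

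The heart of the argument is the per-label count from Lemma~\ref{bricko1}: each label $x_i$ placed in a full brick receives $s$ copies and hits $4s-1$ distinct labels of $Y$, namely $2s-1$ consecutive skew pairs together with one extra label, so that the $n-(4s-1)=2(r-s)$ missed labels again split into $r-s$ skew pairs. Since corresponding positions of $B^+$ and $B^-$ carry skew pairs and the two $Y$-labels on every $4$-face of a brick form a skew pair, each missed skew pair of $x_i$ recurs on a $4$-face among the vertices of $B^+\cup B^-$ outside $x_i$'s own brick, into which I insert one further copy of $x_i$. This brings $x_i$ to $r$ copies hitting $4s-1+2(r-s)=2s+2r-1=n$ labels, i.e.\ all of $Y$. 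The bookkeeping to verify is that the single unpaired label is absorbed correctly---this is exactly what makes the total $2s+2r-1$ land on $n$---and that the $4$-faces chosen for the extra copies of different labels are disjoint.

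Next I dispose of the $j\le3$ leftover labels $\VEC x{m-j+1}m$ by the device used for even $n$: when $j$ is odd, $x_m$ gets one copy spanning the outer end of $A^+$ and the inner end of $A^-$, and when $j\ge2$ single copies are assigned at one end; each is then completed with copies in $4$-faces. If a leftover label gets $p\le2$ brick-type copies, those copies must hit $n-2(r-p)=2(s+p)-1$ labels, i.e.\ $s+p-1$ skew pairs plus one extra; ensuring distinctness bounds $s+p$ against $\FR{n+1}2$, which is readily verified for all odd $n\ge5$ in this range from $s\le\FR{n+1}4$. As in Theorem~\ref{even} I then confirm that $B^+\cup B^-$ offers enough cyclically consecutive skew pairs visible from the relevant ends, using the analog of $q-t\ge j/4$ and, for $j=3$, the sharing of one boundary pair between the two $A^+$ vertices and the two $A^-$ vertices.

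Two points need separate care, and I expect the second to be the main obstacle. First, for $2$-connectedness it suffices by Lemma~\ref{2conna} that the four induced subgraphs on $B^+\cup A^+$, $A^+\cup B^-$, $B^-\cup A^-$, and $A^-\cup B^+$ are connected; since we represent a complete bipartite graph we may freely add edges between the two axes within each quadrant to secure this while preserving planarity and the bipartition, and the subsequent degree-$2$ insertions in $4$-faces preserve $2$-connectedness. The genuine difficulty is the degenerate case $2s=\FR{n+1}2$, equivalently $s=r$ with $n\equiv3\pmod4$ and $m=n+1$ (Figure~\ref{k87}): here $r-s=0$, so no $4$-face insertions occur, $j=0$, and each label must hit all $n=4s-1$ labels using only its $s=r$ brick copies, precisely when the skew pairs wrap around so that the first label of $B^+$ coincides with the last of $B^-$ within a brick. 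The careful clauses of Definition~\ref{bricko}---that the first copy of $x_{2i-1}$ reaches only one vertex of $\vB$ and the last copy of $x_{2i}$ only one vertex of $\wB$---are exactly what keep these $4s-1$ hit labels distinct in the wraparound, and I would verify directly, guided by Figure~\ref{k87}, that the resulting graph is a $2$-connected planar $r$-split, completing the proof.
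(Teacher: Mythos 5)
Your proposal is correct and follows essentially the same route as the paper's proof: reduce to the critical $m$ via Lemma~\ref{critical}, tile $A^+$ and $A^-$ with the odd bricks so each label hits $4s-1$ labels forming consecutive skew pairs plus one extra, insert the remaining $r-s$ copies into $4$-faces carrying the missed skew pairs, handle the $j\le 3$ leftover labels at the junction of $A^+$ and $A^-$, and secure $2$-connectedness by Lemma~\ref{2conna}. The one case you flag for separate verification, $2s=\FR{n+1}{2}$ with $s=r$, is in fact already absorbed uniformly by Lemma~\ref{bricko1} (the asymmetric first/last clauses of Definition~\ref{bricko} are exactly what keep the $4s-1$ hit labels distinct there), so no additional ad hoc check is needed.
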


\begin{proof}
By Lemma~\ref{critical}, we may assume $m=\FL{\FR{2rn-4}{n-2r}}$.
We have reduced to $\FR{n+1}4\le r<\FR n2$.  With $s=\FR{n+1}2-r$, we have
$s\le\FR{n+1}4\le r$.  Also $n-2r=2s-1$.  Let $q=\FR{rn/2-1}{2s-1}$ and
$t=\FL{q}$.  We have $m=\FL{4q}$, so $m=4t+j$ for some $j\in\{0,1,2,3\}$,
depending on where in $[0,1)$ is $q$.

We put $i$-bricks into $A^+$ for $1\le i\le t$ (from the left in
Figure~\ref{k169}).  The last vertex of $B^+$ in the $i$-brick is the
first vertex in the $(i+1)$-brick (similarly for $B^-$).  Thus these bricks use
$1+(2s-1)t$ vertices from $B^+$ (and $B^-$).  Since $B^+$ and $B^-$ each have
at least $(rn-1)/2$ vertices, and $(rn-1)/2=(1/2)+(2s-1)q\ge (1/2)+(2s-1)t$,
there is room for these bricks.  Similarly, working inward from the outer face
(from the right in Figure~\ref{k169}), we put $i$-bricks into $A^-$ for
$t+1\le i\le 2t$.  Counting the last vertex of the $t$-brick, the number of
vertices remaining visible to unused vertices of $A^+$ at the right end of
$B^+$ and $B^-$ together is $rn-2(2s-1)t$, which equals $2+2(2s-1)(q-t)$.
Similarly, this many vertices are visible to unused vertices of $A^-$ at the
left end.

Since all skew pairs remain available on the faces with $A^+$ involving the
first $n$ vertices in $B^+$ and $B^-$, we have now satisfied the vertices
$\VEC x1{4t}$.  Each such label has been used $s$ times and hit $4s-1$ labels
in $Y$.  Since $n+1-4s=2(r-s)$ and the $2(r-s)$ missed labels occur in skew
pairs, we can add $r-s$ vertices with this label in the appropriate faces to
hit the remaining $2(r-s)$ labels in $Y$.

Since $m=4t+j$, there remain $j$ vertices to process in $X$, where $j\le 3$
(none if $j=0$; as in the example in Figures~\ref{k87} and~\ref{k169}).
The labels that end $B^+$ and $B^-$ and may be visible at the end of $A^+$
are $\{y_n,y_{(n+1)/2}\}$, a skew pair.  The labels that begin $B^+$ and $B^-$
are $\{y_1,y_{(n+3)/2}\}$, the next skew pair.  Thus if a label in $X$ sees
consecutive skew pairs in $B^+\cup B^-$ using vertices in $A^+$, or in $A^-$,
or from the end of $A^+$ and beginning of $A^-$, then the labels in $Y$ hit by
that vertex will be distinct as long as the number of pairs is at most $n/2$.

When $j$ is odd, $x_m$ will receive one vertex at the end of $A^+$ and one
at the beginning of $A^-$.  When $j\ge2$, we assign one vertex in $A^+$ to
$x_{m-j+1}$ and one vertex in $A^-$ to $x_{m-j+2}$.  If this assigns $p$
vertices to a vertex $x_i$ and each of the remaining $r-p$ vertices for $x_i$
will see one skew pair by putting it into a $4$-face, then the specified $p$
vertices for $x_i$ need to hit $n-2(r-p)$ labels in $Y$.  This value equals
$2s-1+2p$, so it suffices for $x_i$ to hit $s+p-1$ consecutive skew pairs and
one label from the next pair.

For $p\le 2$, ensuring that the labels hit are distinct requires
$s+1\le(n-1)/2$.  Since $s\le (n+1)/4$, it suffices to have $(n+1)/4\le(n-3)/2$,
which is equivalent to $n\ge7$.  Since we have reduced to $n\ge4$, and
$s\le(n-1)/4$ when $n\equiv1\mod{4}$, all cases are covered.

It remains only to show that $B^+\cup B^-$ has enough vertices available.
For $j\in\{1,2,3\}$, we need in total to hit $2s+3$, $4s+2$, or $6s+5$ labels,
respectively.  We have observed that there are in total $2+2(2s-1)(q-t)$
vertices of $B$ visible both from the right end of $A^+$ and the left end of
$A^-$.  Since $q-t\ge j/4$, the total number of vertices is
at least $4+(2s-1)j$, which is enough when $j\le2$.  When $j=3$ we are using
two vertices in each of $A^+$ and $A^-$, meaning that the last pair seen by one
vertex can also be the first pair seen by the other.  This provides four
additional visibilities to reach the needed $6s+5$.

Finally, we must ensure that the graph $\wG$ produced before adding the
excess labels in faces is $2$-connected.  Here the argument applying
Lemma~\ref{2conna} to the subgraphs induced by $B^+\cup A^+$, $A^+\cup B^-$,
$B^-\cup A^-$, and $A^-\cup B^+$ is the same as in Theorem~\ref{even}.
\end{proof}

\bibliography{bibfile}
\end{document}